\DeclareSymbolFont{cyrletters}{OT2}{wncyr}{m}{n}
\DeclareMathSymbol{\Sha}{\mathalpha}{cyrletters}{"58}
\newcommand{\bC}{{\mathbb{C}}}
\newcommand{\bQ}{{\mathbb{Q}}}
\newcommand{\bR}{{\mathbb{R}}}
\newcommand{\bZ}{{\mathbb{Z}}}
  \newcommand{\N}{{\mathcal{N}}}
  \newcommand{\R}{{\mathcal{R}}}
\renewcommand{\S}{{\mathcal{S}}}
  \newcommand{\HH}{\mathcal{H}}
\newcommand{\Rep}{\operatorname{Rep}}
\newcommand{\GL}{\operatorname{GL}}
\newcommand{\Aut}{\operatorname{Aut}}
\newcommand{\DD}{\mathbf{D}}
\newcommand{\CC}{\mathbf{C}}
\newcommand{\ep}{\varepsilon}
\newcommand{\upchi}{{\raise.35ex\hbox{$\chi$}}}
\newtheorem{theorem}{Theorem}[section]
\newtheorem{lemma}[theorem]{Lemma}
\newtheorem{conjecture}[theorem]{Conjecture}
\theoremstyle{definition}
\numberwithin{equation}{section}
\begin{document}

\title[Representation of $k$-free integers by binary forms]{On the representation of $k$-free integers by binary forms}

\author{C.~L.~ Stewart}
\address{Department of Pure Mathematics \\
University of Waterloo \\
Waterloo, ON\\  N2L 3G1 \\
Canada}
\email{cstewart@uwaterloo.ca}

\author{Stanley Yao Xiao}
\address{Mathematical Institute \\
University of Oxford \\
Oxford \\  OX2 6GG  \\
United Kingdom}
\email{stanley.xiao@maths.ox.ac.uk}

\indent
\thanks{The research of the first author was supported in part by the Canada Research Chairs Program and by Grant A3528 from the Natural Sciences and Engineering Research Council of Canada.}

\subjclass[2010]{Primary 11D45, Secondary 11N36, 11E76}%
\keywords{binary forms, determinant method, k-free integers}%
\date{\today}


\begin{abstract} Let $F$ be a binary form with integer coefficients, non-zero discriminant and degree $d$ with $d$ at least $3$ and let $r$ denote the largest degree of an irreducible factor of $F$ over the rationals. Let $k$ be an integer with $k \geq 2$ and suppose that there is no prime $p$ such that $p^k$ divides $F(a,b)$ for all pairs of integers $(a,b)$. Let $R_{F,k}(Z)$ denote the number of $k$-free integers of absolute value at most $Z$ which are represented by $F$. We prove that there is a positive number $C_{F,k}$ such that $R_{F,k}(Z)$ is asymptotic to $C_{F,k} Z^{\frac{2}{d}}$ provided that $k$ exceeds $  \frac{7r}{18}$ or $(k,r)$ is $(2,6)$ or $(3,8)$. 
\end{abstract}

\maketitle

\section{Introduction}
\label{Intro}

Let $F$ be a binary form with integer coefficients, non-zero discriminant $\Delta(F)$ and degree $d$ with $d \geq 3$. For any positive number $Z$ let $\R_F(Z)$ denote the set of non-zero integers $h$ with $|h| \leq Z$ for which there exist integers $x$ and $y$ such that $F(x,y) = h$. Denote the cardinality of a set $\S$ by $|\S|$ and let $R_F(Z) = |\R_F(Z)|$. In \cite{SX} Stewart and Xiao proved that there exists a positive number $C_F$ such that
\begin{equation} \label{RFZ} R_F(Z) \sim C_F Z^{\frac{2}{d}}. \end{equation}
Such a result had been obtained earlier by Hooley in \cite{Hoo0}, \cite{Hoo1-2}, \cite{Hoo2} and \cite{Hoo22} when $F$ is an irreducible binary cubic form, when $F$ is a quartic form of the shape
\[F(x,y) = ax^4 + bx^2 y^2 + cy^4.\]
and when $F$ is the product of linear forms with integer coefficients. In addition, a number of authors including Bennett, Dummigan, and Wooley \cite{BDW}, Browning \cite{Br2}, Greaves \cite{Grea}, Heath-Brown \cite{HB3}, Hooley \cite{Hoo1-1}, \cite{Hool1}, \cite{Hool2}, Skinner and Wooley \cite{SWoo} and Wooley \cite{Woo} obtained asymptotic estimates for $R_F(Z)$ when $F$ is a binomial form. \\

Let $k$ be an integer with $k \geq 2$. An integer is said to be $k$-free if it is not divisible by the $k$-th power of a prime number. For any positive number $Z$ let $\R_{F,k}(Z)$ denote the set of $k$-free integers $h$ with $|h| \leq Z$ for which there exist integers $x$ and $y$ such that $F(x,y) = h$ and put $R_{F,k}(Z) = |\R_{F,k}(Z)|$. Extending work of Hooley \cite{Hoo0}, \cite{Hooley}, Gouv\^{e}a and Mazur \cite{GM} in 1991 proved that if there is no prime $p$ such that $p^2$ divides $F(a,b)$ for all pairs of integers $(a,b)$, if all the irreducible factors of $F$ over $\bQ$ have degree at most $3$ and if $\ep$ is a positive real number then there are positive numbers $C_1$ and $C_2$, which depend on $\ep$ and $F$, such that if $Z$ exceeds $C_1$ then
\begin{equation} \label{basic inequality} R_{F,2}(Z) > C_2 Z^{\frac{2}{d}}.\end{equation}
This was subsequently extended by Stewart and Top in \cite{ST2}. Let $r$ be the largest degree of an irreducible factor of $F$ over $\bQ$. Let $k$ be an integer with $k \geq 2$ and suppose that there is no prime $p$ such that $p^k$ divides $F(a,b)$ for all integer pairs $(a,b)$. They showed, by utilizing work of Greaves \cite{Gre} and Erd\H{o}s and Mahler \cite{EM},
that if $k$ is at least $(r-1)/2$ or $k = 2$ and $r = 6$ then there are positive numbers $C_3$ and $C_4$, which depend on $k$ and $F$, such that if $Z$ exceeds $C_3$ then
\begin{equation} \label{basic inequality 2} R_{F,k}(Z) > C_4 Z^{\frac{2}{d}}. \end{equation}
The estimates (\ref{basic inequality}) and (\ref{basic inequality 2}) were used by Gouv\^{e}a and Mazur \cite{GM} and Stewart and Top \cite{ST2} in order to estimate, for any elliptic curve defined over $\bQ$, the number of twists of the curve for which the rank of the Mordell-Weil group is at least $2$. \\

For any real number $x$ let $\lceil x \rceil$ denote the least integer $u$ such that $x \leq u$. In 2016 \cite{X} Xiao extended the range for which (\ref{basic inequality 2}) holds by generalizing the determinant method of Heath-Brown \cite{HB1} and Salberger \cite{S1}, \cite{S2} to the setting of weighted projective space. He proved that if 
\begin{equation} \label{k inequality} k > \min \left\{ \frac{7r}{18}, \left \lceil \frac{r}{2} \right \rceil - 2 \right \}, \end{equation}
 and $(k,r)$ is not (3,8) then (\ref{basic inequality 2}) holds. In addition, the related problem of estimating $B_{F,k}(Z)$, the number of pairs of integers $(x,y)$ with $\max\{|x|, |y|\} \leq Z$ for which $F(x,y)$ is $k$-free, has been studied by Browning \cite{B2},  Filaseta, \cite{Fil}, Granville \cite{Gran}, Greaves \cite{Gre}, Helfgott \cite{Hel}, Hooley \cite{Hoo3}, \cite{Hoo4},    Murty and Pasten \cite{MP}, Poonen \cite{Poo} and Xiao \cite{X}.  Recently Bhargava \cite{Bha} and Bhargava, Shankar and Wang \cite{BSW} have extended these estimates to the case of discriminant forms.  \\

By building on the method used to prove (\ref{RFZ}) we are now able to give an asymptotic estimate for $R_{F,k}(Z)$ provided that $k$ satisfies (\ref{k inequality}). Such an estimate has not previously been established for any integer $k \geq2$ and any binary form $F$ with integer coefficients, degree at least $3$ and non-zero discriminant. We are able to prove the following result.

\begin{theorem} \label{MT1} Let $F$ be a binary form with integer coefficients, non-zero discriminant and degree $d$ with $d \geq 3$ and let $r$ denote the largest degree of an irreducible factor of $F$ over the rationals. Let $k$ be an integer with $k \geq 2$ and suppose that there is no prime $p$ such that $p^k$ divides $F(a,b)$ for all pairs of integers $(a,b)$. Suppose that (\ref{k inequality}) holds. Then there exists a positive number $C_{F,k}$ such that
\begin{equation} \label{est1} R_{F,k}(Z) = C_{F,k} Z^{\frac{2}{d}} + O_{F,k} \left(Z^{\frac{2}{d}} /g_{k,r}( Z ) \right) \end{equation}
where
\begin{equation} \label{gkr} g_{k,r}(Z) = \begin{cases} \log Z \log \log Z & \text{if } (k,r) \ne (2,6) \text{ or } (3,8) \\ \\
(\log Z)^{\frac{(d-2)(0.7043)}{d}}     & \text{if } (k,r) = (2,6) \\ \\
\left(\dfrac{\log \log Z}{\log \log \log Z} \right)^{1 - \frac{2}{d}} & \text{if } (k,r) = (3,8). 
\end{cases} \end{equation}
\end{theorem}

Throughout this article we make use of the standard notation "$O$", "$o$" and "$\sim$", for instance as in Section 1.6 of \cite{HW}, with the convention that the implicit constant denoted by the symbol "$O$" may be determined in terms of the subscripts attached to it. \\

For a positive number $Z$ we put
\[\N_F(Z) = \{(x,y) \in \bZ^2 : 1 \leq |F(x,y)| \leq Z\}\]
and
\[N_F(Z) = |\N_F(Z)|.\]
We also put
\begin{equation} \label{area} A_F = \mu (\{(x,y) \in \bR^2 : |F(x,y)| \leq 1 \})\end{equation}
where $\mu(\cdot)$ denotes the area of a set in $\bR^2$. In 1933 Mahler \cite{Mah} proved that if $F$ is a binary form with integer coefficients and degree $d$ with $d \geq 3$ which is irreducible over $\bQ$ then
\[N_F(Z) = A_F Z^{\frac{2}{d}} + O_F \left(Z^{\frac{1}{d-1}} \right).\]
The assumption that $F$ is irreducible may be replaced with the weaker requirement that $F$ have non-zero discriminant; see \cite{Thun}. \\

Let $k$ be an integer with $k \geq 2$. For a positive number $Z$ we put
\[\N_{F,k}(Z) = \{(x,y) \in \bZ^2 : F(x,y) \text{ is }k\text{-free and } 1 \leq |F(x,y)| \leq Z\}\]
and
\[N_{F,k}(Z) = |\N_{F,k}(Z)|.\]
For each positive integer $m$ we put
\[\rho_F(m) = |\{(i,j) \in \{0, \cdots, m-1\}^2 : F(i,j) \equiv 0 \pmod{m} \}|\]
and
\[\lambda_{F,k} = \prod_p \left(1 - \frac{\rho_F(p^k)}{p^{2k}} \right),\]
where the product is taken over the primes $p$. Observe that the product converges since $k \geq 2$ and $\rho_F(p^k)$ is at most $p^{2k-2} + d p^k$ provided that $p$ does not divide the discriminant $\Delta(F)$, see \cite{ST1}. Further $\lambda_{F,k} = 0$ whenever there is a prime $p$ such that $p^k$ divides $F(a,b)$ for all $(a,b)$ in $\bZ^2$. Next we put 
\begin{equation} \label{main constant 1} c_{F,k} = \lambda_{F,k} A_F .\end{equation}
In order to establish Theorem \ref{MT1} we require the following extension of Mahler's result.
\begin{theorem} \label{MT2} Let $F$ be a binary form with integer coefficients, non-zero discriminant and degree $d$ with $d \geq 3$ and let $r$ denote the largest degree of an irreducible factor of $F$ over $\bQ$.  Let $k$ be an integer with $k \geq 2$ and suppose that  (\ref{k inequality}) holds. Then, with $c_{F,k}$ defined by (\ref{main constant 1}), we have
\begin{equation} \label{est11}N_{F,k}(Z) = c_{F,k} Z^{\frac{2}{d}} + O_{F,k} \left(Z^{\frac{2}{d}} /g_{k,r} (Z) \right) \end{equation}
with $g_{k,r}(Z)$ given by (\ref{gkr}). 
\end{theorem}

Let $A$ be an element of $\GL_2(\bQ)$ with
\[A = \begin{pmatrix} a_1 & a_2 \\ a_3 & a_4 \end{pmatrix}.\]
Put $F_A(x,y) = F(a_1 x + a_2 y, a_3 x + a_4 y)$. We say that $A$ fixes $F$ if $F_A = F$. The set of $A$ in $\GL_2(\bQ)$ which fix $F$ is the \emph{automorphism group} of $F$ and we shall denote it by $\Aut F$. Let $G_1$ and $G_2$ be subgroups of $\GL_2(\bQ)$. We say that they are equivalent under conjugation if there is an element $T$ in $\GL_2(\bQ)$ such that $G_1 = T G_2 T^{-1}$. There are 10 equivalence classes of finite subgroups of $\GL_2(\bQ)$ under $\GL_2(\bQ)$-conjugation to which $\Aut F$ might belong, see \cite{New} and \cite{SX}, and we give a representative of each equivalence class together with its generators in Table 1 below. \\
\begin{center} \label{Table 1}
\begin{tabular}
{ |p{1.5cm}|p{4.5cm}|p{1.5cm}|p{4.5cm}|  }
 \hline
 \multicolumn{4}{|c|}{Table 1} \\
 \hline
 Group & Generators & Group & Generators \\
 \hline
 & & & \\
 $\CC_1$   & $\begin{pmatrix} 1 & 0 \\ 0 & 1 \end{pmatrix}$    & $\DD_1$ &   $\begin{pmatrix} 0 & 1 \\ 1 & 0 \end{pmatrix}$ \\ & & & \\
 $\CC_2$ &   $\begin{pmatrix} -1 & 0 \\ 0 & -1 \end{pmatrix}$  & $\DD_2$ & $\begin{pmatrix} 0 & 1 \\ 1 & 0 \end{pmatrix}, \begin{pmatrix} -1 & 0 \\ 0 & -1 \end{pmatrix}$ \\ & & & \\
 $\CC_3$ & $\begin{pmatrix} 0 & 1 \\ -1 & -1 \end{pmatrix}$ & $\DD_3$ & $\begin{pmatrix} 0 & 1 \\ 1 & 0 \end{pmatrix}, \begin{pmatrix} 0 & 1 \\ -1 & -1 \end{pmatrix}$\\ & & & \\ 
 $\CC_4$    & $\begin{pmatrix} 0 & 1 \\ -1 & 0 \end{pmatrix}$ & $\DD_4$ &  $\begin{pmatrix} 0 & 1 \\ 1 & 0 \end{pmatrix}, \begin{pmatrix} 0 & 1 \\ -1 & 0 \end{pmatrix}$ \\ & & & \\
 $\CC_6$ &  $\begin{pmatrix} 0 & - 1 \\ 1 & 1 \end{pmatrix}$  & $\DD_6$ & $\begin{pmatrix} 0 & 1 \\ 1 & 0 \end{pmatrix}, \begin{pmatrix} 0 & 1 \\ -1 & 1 \end{pmatrix}$ \\ & & & \\
  \hline
\end{tabular}
\end{center}

Let $\Lambda$ be the sublattice of $\bZ^2$ consisting of $(u,v)$ in $\bZ^2$ for which $A \binom{u}{v}$ is in $\bZ^2$ for all $A$ in $\Aut F$. \\

When $\Aut F$ is conjugate to $\DD_3$ it has three subgroups $G_1, G_2$ and $G_3$ of order $2$ with generators $A_1, A_2$ and $A_3$ respectively, and one, $G_4$ say, of order $3$ with generator $A_4$. Let $\Lambda_i = \Lambda(A_i)$ be the sublattice of $\bZ^2$ consisting of $(u,v)$ in $\bZ^2$ for which $A_i \binom{u}{v}$ is in $\bZ^2$ for $i=1,2,3,4$. \\

When $\Aut F$ is conjugate to $\DD_4$ there are three subgroups $G_1, G_2$ and $G_3$ of order $2$ of $\Aut F / \{\pm I\}$ where $I$ denotes the $2 \times 2$ identity matrix. Let $\Lambda_i$ be the sublattice of $\bZ^2$ consisting of $(u,v)$ in $\bZ^2$ for which $A \binom{u}{v}$ is in $\bZ^2$ for $A$ in a generator of $G_i$ for $i=1,2,3$. \\

Finally when $\Aut F$ is conjugate to $\DD_6$ there are three subgroups $G_1, G_2$ and $G_3$ of order $2$ and one, $G_4$ say, of order $3$ in $\Aut F /\{\pm I\}$. Let $A_i$ be in a generator of $G_i$ for $i = 1,2,3,4$. Let $\Lambda_i$ be the sublattice of $\bZ^2$ consisting of $(u,v)$ in $\bZ^2$ for which $A_i \binom{u}{v}$ is in $\bZ^2$ for $i=1,2,3,4$. \\

Let $L$ be a sublattice of $\bZ^2$. We define $c_{F,k,L}$ in the following manner. For any basis $\{\omega_1, \omega_2\}$ of $L$ with $\omega_1 = (a_1, a_3)$ and $\omega_2 = (a_2, a_4)$ we define $F_{\omega_1, \omega_2}(x,y) = F(a_1 x + a_2 y, a_3 x + a_4 y)$. Notice that if $\{\omega_1', \omega_2'\}$ is another basis for $L$ then it is related to $\{\omega_1, \omega_2\}$ by a unimodular transformation. As a consequence, 
\[c_{F_{\omega_1, \omega_2}, k} = c_{F_{\omega_1', \omega_2'}, k}\]
and so we may define $c_{F,k,L}$ by putting
\[c_{F,k,L} = c_{F_{\omega_1, \omega_2}, k}.\]
Observe that if $L = \bZ^2$ then $c_{F,k,L} = c_{F,k}$. For brevity, we shall write 
\begin{equation} \label{CL} c(L) = c_{F,k,L}. \end{equation}

We are now able to determine the positive number $C_{F,k}$ in (\ref{est1}) of Theorem \ref{MT1} explicitly in terms of $\Aut F$ and the lattices described above. 

\begin{theorem} \label{MT3} The positive number $C_{F,k}$ in the statement of Theorem \ref{MT1} is given by the following table: 
\begin{center} \label{Table 2}
\begin{tabular}
{ |p{1.3cm}|p{3.1cm}|p{1.3cm}|p{9.6cm}|  }
 
 \hline
$\Rep (F)$  & $C_{F,k}$ & $\Rep (F)$ & $C_{F,k}$ \\
 \hline
 $\CC_1$   & $c_{F,k}$    & $\DD_1$ &   $c_{F,k} - \dfrac{c(\Lambda)}{2}$ \\ & & & \\
 $\CC_2$ &   $\dfrac{c_{F,k}}{2}$  & $\DD_2$ & $\dfrac{1}{2} \left(c_{F,k} - \dfrac{c(\Lambda)}{2} \right)$ \\ & & & \\
 $\CC_3$ & $c_{F,k} - \dfrac{2 c(\Lambda)}{3}$ & $\DD_3$ & $c_{F,k} - \dfrac{ c(\Lambda_1)}{2} - \dfrac{c(\Lambda_2)}{2} - \dfrac{c(\Lambda_3)}{2} - \dfrac{2 c(\Lambda_4)}{3} + \dfrac{4 c(\Lambda)}{3} $\\ & & & \\
 $\CC_4$    & $\dfrac{1}{2} \left(c_{F,k} - \dfrac{c(\Lambda)}{2} \right)$ & $\DD_4$ &  $\dfrac{1}{2} \left(c_{F,k} - \dfrac{ c(\Lambda_1)}{2} - \dfrac{c(\Lambda_2)}{2} - \dfrac{ c(\Lambda_3)}{2} + \dfrac{3 c(\Lambda)}{4} \right)$ \\ & & & \\
 $\CC_6$ &  $\dfrac{1}{2} \left(c_{F,k} - \dfrac{2 c(\Lambda)}{3} \right)$  & $\DD_6$ & $\dfrac{1}{2} \left(c_{F,k} - \dfrac{ c(\Lambda_1)}{2} - \dfrac{ c(\Lambda_2)}{2} - \dfrac{ c(\Lambda_3)}{2} - \dfrac{2c(\Lambda_4)}{3} + \dfrac{4 c(\Lambda)}{3} \right)$ \\ & & & \\
  \hline 
\end{tabular}
\end{center}
Here $\Rep(F)$ denotes a representative of the equivalence class of $\Aut F$ under $\GL_2(\bQ)$-conjugation, $\Lambda$ and $\Lambda_i$'s are defined above, $c_{F,k}$ is as in (\ref{main constant 1}), and $c(\Lambda)$ and $c(\Lambda_i)$ as in (\ref{CL}).

\end{theorem}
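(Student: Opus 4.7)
The plan is to adapt the orbit-counting argument used in \cite{SX} to derive $C_F$ in the non-$k$-free case, replacing Mahler's theorem by Theorem \ref{MT2} as the input asymptotic. The combinatorial framework -- the ten conjugacy classes of $\Aut F \leq \GL_2(\bQ)$, the sublattices $\Lambda$ and $\Lambda_i$, and the inclusion-exclusion bookkeeping -- is taken over from \cite{SX}; the role of Theorem \ref{MT2} here is simply to provide, for each sublattice $L$ that arises, the asymptotic $c(L) Z^{2/d}$ with the error term appropriate to the case at hand.

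More concretely, for $(x,y)\in \bZ^2$ with $F(x,y)\neq 0$ write $K(x,y) = \{A\in \Aut F : A(x,y)^{\top}\in \bZ^2\}$. For $(x,y)$ with trivial $\Aut F$-stabilizer, which excludes only $O_F(1)$ points, $|K(x,y)|$ equals the size of the $\Aut F$-orbit of $(x,y)$ intersected with $\bZ^2$. Moreover, by standard bounds on the number of integer representations of a value of a binary form of degree $\geq 3$ (going back to Evertse), all but $o(Z^{2/d})$ of the $k$-free values $h$ with $|h|\le Z$ have all their $\bZ^2$-preimages contained in a single $\Aut F$-orbit. Consequently
\[
R_{F,k}(Z) \;=\; \sum_{(x,y)\in\N_{F,k}(Z)} \frac{1}{|K(x,y)|} \;+\; o\!\left(Z^{2/d}\right).
\]
Now $|K(x,y)|$ depends only on which of the sublattices $\Lambda, \Lambda_i$ contain $(x,y)$. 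The classification is streamlined by two algebraic identities: if $A\in \GL_2(\bQ)$ has order $3$ (so $A^2+A+I=0$) or order $6$ (so $A^2-A+I=0$), then the sublattices $\{v\in\bZ^2:Av\in\bZ^2\}$ and $\{v\in\bZ^2:A^2v\in\bZ^2\}$ coincide, and for $A$ of order $4$ one has $A^2=-I$, which preserves $\bZ^2$ trivially. For each of the ten conjugacy classes one then lists the strata of $\bZ^2$ on which $|K(x,y)|$ is constant, applies Theorem \ref{MT2} to the form $F_{\omega_1,\omega_2}$ defined by a basis $\{\omega_1,\omega_2\}$ of the relevant sublattice (this form has the same degree $d$, the same largest irreducible-factor degree $r$ and non-zero discriminant, so (\ref{k inequality}) still holds and $N_{F_{\omega_1,\omega_2},k}(Z) = c(L) Z^{2/d} + (\text{error})$), and sums the contributions weighted by $1/|K(x,y)|$ to produce Table 2.

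The main obstacle lies in the combinatorics for the three dihedral cases $\DD_3, \DD_4, \DD_6$. Here several sublattices $\Lambda_i$ interact, and the clean form of Table 2 relies on the identities $\Lambda_i\cap\Lambda_j=\Lambda$ for distinct $G_i, G_j$, which follow by an elementary lattice calculation from the fact that any two of the listed subgroups of $\Aut F$ generate the whole group. With these identities in hand, the inclusion-exclusion collapses to the stated weighted sums over the single sublattices $\Lambda$ and $\Lambda_i$, and the error term of Theorem \ref{MT2} is inherited without further loss.
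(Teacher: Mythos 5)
Your overall architecture---stratify $\bZ^2$ according to which of the sublattices $\Lambda, \Lambda_i$ contain $(x,y)$, apply Theorem \ref{MT2} to the form $F_{\omega_1,\omega_2}$ attached to a basis of each sublattice, and weight each stratum by the reciprocal of the number of integer representations---is exactly the paper's proof, and your inclusion-exclusion does reproduce Table 2. The genuine gap is in the step where you discard the values $h$ whose $\bZ^2$-preimages are not contained in a single $\Aut F$-orbit. You justify this by ``standard bounds on the number of integer representations\dots going back to Evertse,'' but Evertse-type (or Bombieri--Schmidt) bounds only say that each \emph{individual} $h$ has boundedly many representations; they say nothing about how many $h$ with $|h| \leq Z$ admit representations in two distinct orbits. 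A priori that could happen for $\gg Z^{2/d}$ values---the full order of the main term---so your identity $R_{F,k}(Z) = \sum 1/|K(x,y)| + o(Z^{2/d})$ is unproven as stated. The paper closes precisely this gap with Lemma \ref{SX non-est}, the bound $R_F^{(2)}(Z) = O_{F,\ep}(Z^{\beta_d + \ep})$ with $\beta_d < 2/d$ as in (\ref{beta d}), which is the deep input from \cite{SX}, proved via the $p$-adic determinant method of Heath-Brown \cite{HB1} and Salberger \cite{S1}, \cite{S2}, not via per-value solution counts. Note moreover that the power saving in that lemma is what allows the error terms (\ref{est1}) and (\ref{est2}) of Theorem \ref{MT1} to survive the passage from $N_{F,k}$ to $R_{F,k}$; your $o(Z^{2/d})$, even if granted, would only identify the constant up to an $o(Z^{2/d})$ error and would not be consistent with the error terms that Theorem \ref{MT3} implicitly inherits from Theorem \ref{MT1}.

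Two smaller points. First, integer points with non-trivial stabilizer in $\Aut F$ are eigenvectors with eigenvalue $1$ and hence lie on finitely many lines through the origin; on each such line there are $O_F(Z^{1/d})$ points with $0 < |F(x,y)| \leq Z$, not $O_F(1)$ as you claim---still negligible, so no harm done. Second, your justification of $\Lambda_i \cap \Lambda_j = \Lambda$, namely that any two of the listed subgroups generate the whole group, does not suffice on its own: from $A_i v \in \bZ^2$ and $A_j v \in \bZ^2$ one cannot conclude $A_j A_i v \in \bZ^2$, since $A_j$ need not map $\bZ^2$ into $\bZ^2$. The identity is true, but it requires the explicit verification recorded as Lemma \ref{common core} (Lemma 3.2 of \cite{SX}), which is what the paper invokes and what you should cite rather than treat as immediate. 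Finally, the paper ends by checking positivity, $C_{F,k} \geq c_{F,k}/|\Aut F| > 0$ since $|\Aut F| \leq 12$, a point your sketch omits even though positivity is part of the statement.
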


We conjecture that the estimates for $R_{F,k}(Z)$ in Theorem \ref{MT1} and for $N_{F,k}(Z)$ in Theorem \ref{MT2} apply without hypothesis (\ref{k inequality}).

\begin{conjecture} \label{MC} Let $F$ be a binary form with integer coefficients, non-zero discriminant and degree $d$ with d at least $3$. Let $k$ be an integer larger than $1$. Then either $c_{F,k} = 0$ or
\begin{equation} \label{estMC1} N_{F,k}(Z) \sim c_{F,k}Z^{\frac{2}{d}} \end{equation}
where $c_{F,k}$ is defined by (\ref{main constant 1}). If there is no prime $p$ such that $p^k$ divides $F(a,b)$ for all pairs of integers $(a,b)$ then
\begin{equation} \label{estMC2} R_{F,k}(Z) \sim C_{F,k}Z^{\frac{2}{d}} \end{equation}
where $C_{F,k}$ is the positive number given by Theorem \ref{MT3}.
\end{conjecture}

Let $F$ be a binary form with integer coefficients, non-zero discriminant and degree $d$ with $d\geq 3$. Granville \cite{Gran} established an asymptotic estimate for $B_{F,2}(Z)$, the number of pairs of integers $(x,y)$ with absolute value at most $Z$ for which $F(x,y)$ is squarefree subject to the $abc$ conjecture, see eg. \cite{SY}. Let $k$ be an integer with $k>1$. The same analysis allows one to give an asymptotic estimate for $B_{F,k}(Z)$, the number of pairs of integers $(x,y)$ with absolute value at most $Z$ for which $F(x,y)$ is $k$-free. We may use such an estimate in conjunction with the arguments used to prove Theorem \ref{MT1} and Theorem \ref{MT2} in order to prove Conjecture \ref{MC}, see the final paragraph of Section 6. In particular, Conjecture  \ref{MC} follows from the $abc$ conjecture.
\section{Preliminary lemmas}
\label{prelims}

Let $F(x,y)$ be a binary form with integer coefficients, non-zero discriminant and degree $d$ with $d \geq 3$.  Suppose that $F$ factors over $\bC$ as 
\begin{equation} \label{factorization} F(x,y) = \prod_{i=1}^d (\gamma_i x + \beta_i y) \end{equation}
and put
\[\HH(F) = \prod_{i=1}^d \sqrt{|\gamma_i|^2 + |\beta_i|^2}. \]
Then $\HH(F)$ does not depend on the factorization in (\ref{factorization}). \\

A special case of Theorem 3 of Thunder \cite{Thun} is the following explicit version of a result of Mahler \cite{Mah}.

\begin{lemma} \label{Thunder lemma} Let $F$ be a binary form with integer coefficients, non-zero discriminant and degree $d \geq 3$. Then, with $A_F$ defined by (\ref{area}), we have
\[ \left \lvert N_F(Z) - A_F Z^{\frac{2}{d}} \right \rvert = O\left(Z^{\frac{1}{d-1}} \HH(F)^{d-2} \right),\]
where the implied constant is absolute. 
\end{lemma}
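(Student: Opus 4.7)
The plan is to deduce this as a specialization of Thunder's Theorem 3, which gives an explicit lattice-point count for the region $\{|F(x,y)| \leq Z\}$ relative to an arbitrary sublattice of $\bZ^2$; here one takes the lattice to be $\bZ^2$ itself. For a self-contained argument, I would first compute the area: since $F$ is homogeneous of degree $d \geq 3$, the region $\mathcal{D}_Z := \{(x,y) \in \bR^2 : |F(x,y)| \leq Z\}$ equals $Z^{1/d} \mathcal{D}_1$ and hence has area $A_F Z^{2/d}$, with finiteness of $A_F$ guaranteed by Mahler's finiteness theorem once $d \geq 3$.

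Next I would dissect $\mathcal{D}_1$ using the real linear factors of $F$ over $\bR$ together with the angular sectors on which a given factor $\gamma_i x + \beta_i y$ attains the smallest absolute value among all factors. Each resulting ``cusp'' region admits a linear substitution sending the dominant factor and a transverse direction to the coordinate axes; the transformed region is bounded by simple algebraic curves of the form $|u| \leq Z^{1/d}$ and $|v| \lesssim (Z/|u|^{d-1})^{1/d}$ and is well-suited to lattice-point counting by integration. The Jacobian of this substitution involves quantities like $|\gamma_i \beta_j - \gamma_j \beta_i|$, which after summing over all pieces contribute the factors $\sqrt{|\gamma_i|^2 + |\beta_i|^2}$ that combine into $\HH(F)$.

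In each transformed piece the lattice-point count is then obtained by integrating $\lfloor \,\cdot\, \rfloor$ along the dominant direction. The leading term recovers $A_F Z^{2/d}$, while the error is controlled by the perimeter of the piece together with the endpoint contributions from the boundary of the dominant interval. The cusps extend along the zero lines out to distance of order $Z^{1/(d-1)}$, producing the $Z^{1/(d-1)}$ factor in the error term, and carrying the explicit coefficient bookkeeping from the linear substitutions through the sum over pieces yields the $\HH(F)^{d-2}$ dependence.

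The hard part will be tracking the dependence on $\HH(F)$ uniformly through the dissection and the change of variables, particularly when two linear factors of $F$ are nearly parallel so that the relevant resultants $|\gamma_i \beta_j - \gamma_j \beta_i|$ are small. Nonvanishing of $\Delta(F)$ prevents any two factors from being proportional, but one must show that the combined contribution across all cusp regions remains within the claimed $O(Z^{1/(d-1)} \HH(F)^{d-2})$ bound with an absolute implied constant; this uniform estimate is precisely the content of the explicit version supplied by Thunder's Theorem 3, whose specialization completes the proof.
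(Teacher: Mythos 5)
Your proposal is correct and takes essentially the same route as the paper, which proves this lemma simply by citing it as a special case of Theorem 3 of Thunder \cite{Thun} (applied with the lattice $\bZ^2$). Your additional sketch of the cusp dissection is a reasonable outline of what lies inside Thunder's theorem, but since you rightly defer the uniform dependence on $\HH(F)$ to Thunder's result, your argument reduces to the same citation the paper uses.
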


We may write
\begin{equation} \label{fact1} F(x,y) = a\prod_{i=1}^d ( x - \alpha_i y) \end{equation}
where $a$ is a positive integer and $\alpha_1, \dots, \alpha_d$ are the roots of $F(x,1)$ provided that $y$ is not a factor of $F(x,y)$. In the latter case, since the discriminant of $F$ is non-zero, we have
\begin{equation} \label{fact2} F(x,y) = ay\prod_{i=1}^{d-1} ( x - \alpha_i y). \end{equation}
Put
\begin{equation} \label{EF} E_F = \frac{2 \max\limits_{1\leq j\leq k}(1,|\alpha_j|)}{\min(1,\min\limits_{i\neq j}|\alpha_i-\alpha_j|)} \end{equation}
where $k=d$ if (\ref{fact1}) holds and $k=d-1$ if (\ref{fact2}) holds.
\begin{lemma} \label{cusp lemma} Let $F$ be a binary form with integer coefficients, non-zero discriminant and degree $d$ with $d \geq 3$. Let $Z$ be a positive real number. For any positive real number $\beta$ larger than $E_F$ the number of pairs of integers $(x,y)$ with
\[0 < |F(x,y)| \leq Z\]
for which
\[\max\{|x|, |y|\} > Z^{\frac{1}{d}} \beta \]
is 
\[O_F \left(Z^{\frac{1}{d}} \log Z + \frac{Z^{\frac{2}{d}}}{\beta^{d-2}} \right).\]
\end{lemma}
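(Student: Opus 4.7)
The plan is to localize the integer pairs in the cusp to thin strips near the real-root lines $x=\alpha_iy$ of $F(t,1)$, and then count lattice points in each strip using strip-width estimates together with a Diophantine input for the very narrow portion of each strip.

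First, by interchanging the roles of $x$ and $y$ (at most doubling the count and switching between (\ref{fact1}) and (\ref{fact2}) when $y\mid F$), I may assume $|y|\geq|x|$, so that $|y|>Z^{1/d}\beta$. Writing $F(x,y)=a\prod_{i=1}^d(x-\alpha_iy)$, the hypothesis $|F(x,y)|\leq Z$ combined with $|y|>Z^{1/d}\beta>Z^{1/d}E_F$ gives
\[\prod_{i=1}^d\biggl|\frac{x}{y}-\alpha_i\biggr|\leq\frac{Z}{|a||y|^d}<\frac{1}{E_F^d}=\Bigl(\frac{m}{2M}\Bigr)^d,\]
where $m=\min(1,\min_{i\neq j}|\alpha_i-\alpha_j|)$ and $M=\max_j\max(1,|\alpha_j|)$. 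Since $|x|\leq|y|$ forces each factor to be at most $2M$, a pigeonhole argument produces one factor $|x/y-\alpha_{i_0}|$ strictly below $m/2$; the triangle inequality then bounds the remaining factors below by $m/2$, yielding $|x-\alpha_{i_0}y|\leq C_FZ/|y|^{d-1}$ for an explicit $F$-dependent constant $C_F$. If $\alpha_{i_0}$ were non-real then $|x-\alpha_{i_0}y|\geq|\im\alpha_{i_0}|\cdot|y|$ would combine with the preceding bound to force $|y|^d\leq C_FZ/|\im\alpha_{i_0}|$, contradicting $|y|>Z^{1/d}\beta$ once $E_F$ is enlarged by a finite $F$-dependent factor to absorb the imaginary-part terms. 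Thus $\alpha_{i_0}$ must be real.

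For each real root $\alpha$ of $F(t,1)$, I then count integer pairs in the strip $|x-\alpha y|\leq C_FZ/|y|^{d-1}$ with $|y|>Z^{1/d}\beta$; each $y$ contributes at most $1+2C_FZ/|y|^{d-1}$ values of $x$. Decomposing the range of $|y|$ dyadically, the sub-range $Z^{1/d}\beta<|y|\leq(2C_FZ)^{1/(d-1)}$ has strip width at least $1$, so each $y$ contributes $O_F(Z/|y|^{d-1})$ integers, and a direct geometric summation bounds the total contribution here by $O_F(Z^{2/d}/\beta^{d-2})$. For $|y|>(2C_FZ)^{1/(d-1)}$ the strip width drops below $1$ and each $y$ contributes at most one integer $x$; here one exploits the lower bound $|x-\alpha y|\geq c_F/|y|^{d-1}$ forced by $|F(x,y)|\geq 1$ together with a dyadic Diophantine-approximation count (bounding the number of $y$ with $\|\alpha y\|_\bZ$ in an interval of width $C_FZ/|y|^{d-1}$ across a logarithmic number of scales) to produce the remaining $O_F(Z^{1/d}\log Z)$ contribution. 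Summing over the finitely many real roots (and doubling for the symmetric case) then yields the stated bound.

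The main obstacle is the last estimate: carefully controlling the lattice-point count in the narrow tail of each cusp, where contributions are governed by how well $\alpha$ is approximated by rationals with denominator of size $|y|$ in the relevant range, and where one must sum such contributions across dyadic blocks so that the resulting logarithmic loss multiplies only the lower-order $Z^{1/d}$ factor rather than the main term.
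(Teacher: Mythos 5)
Your reduction to strips about the nearest root, and your treatment of the wide range $Z^{1/d}\beta<|y|\leq(2C_FZ)^{1/(d-1)}$, are sound: the pigeonhole/separation argument giving $|x-\alpha_{i_0}y|\leq C_FZ/|y|^{d-1}$ is exactly the paper's inequality (\ref{eq11}) obtained on dyadic blocks, and your direct summation correctly recovers the $O_F\left(Z^{2/d}\beta^{-(d-2)}\right)$ term. The genuine gap is in the narrow tail, which you yourself flag as the main obstacle. You assert that the condition $\|\alpha y\|_{\bZ}\leq C_FZ/|y|^{d-1}$ has solutions only ``across a logarithmic number of scales'' and that each scale contributes acceptably, but neither claim is true for a general real $\alpha$, and you never invoke the one property of $\alpha$ that makes them true: algebraicity, via Roth's theorem. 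For a Liouville-type $\alpha$ the strip contains solutions at infinitely many dyadic scales (take $y=q$ with $\|\alpha q\|<q^{-d}$), so some nontrivial irrationality measure is indispensable for the truncation; and within a single scale $Y<|y|\leq 2Y$ the standard count $\ll(1+Y/q)(1+q\delta)$ is only useful if there is a convergent denominator $q$ comparable to $\min(Y,\delta^{-1})$, whose existence again needs Roth's control $q_{k+1}\ll_\ep q_k^{1+\ep}$ on continued-fraction jumps. This is precisely what the paper imports by following Heath-Brown's proof of Theorem 8 of \cite{HB1}: Roth's theorem forces $S(Z;C)=0$ unless $C\ll_F Z^2$, capping the dyadic blocks at $O(\log Z)$, each contributing $O(1+Z/C^{d-2})$ coprime pairs by Lemma 1, part (vii) of \cite{HB1}, with the factor $Z^{1/d}$ in $Z^{1/d}\log Z$ then coming from summing over common divisors $h\leq Z^{1/d}$. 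Note also that your fallback bound $|x-\alpha y|\geq c_F/|y|^{d-1}$ from $|F(x,y)|\geq 1$ cannot substitute for Roth: it has the same order $|y|^{-(d-1)}$ as the strip width $C_FZ/|y|^{d-1}$, so it yields no truncation of the $y$-range at all.

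Two smaller repairs. First, you may not ``enlarge $E_F$ by a finite $F$-dependent factor'': the lemma quantifies over every $\beta>E_F$ with $E_F$ fixed by (\ref{EF}). Fortunately realness of $\alpha_{i_0}$ follows from the stated $E_F$, since non-real roots come in conjugate pairs and hence $2|\im\alpha_{i_0}|=|\alpha_{i_0}-\ol{\alpha_{i_0}}|\geq\min_{i\neq j}|\alpha_i-\alpha_j|$, while your pigeonhole gives $|x/y-\alpha_{i_0}|<m/2\leq|\im\alpha_{i_0}|$ in your notation; alternatively you can avoid realness altogether by passing to the strip $|x-\re(\alpha_{i_0})y|\leq|x-\alpha_{i_0}y|$. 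Second, rational roots $\alpha=u/v$ must be treated separately from the equidistribution argument: there $\|\alpha y\|_{\bZ}$ is either $0$, which is excluded since $F(x,y)\neq 0$, or at least $1/v$, which truncates that root's tail immediately. With Roth's theorem stated and these points fixed, your route does close; it is a genuinely different argument from the paper's coprime-reduction-plus-gcd-summation scheme, counting all pairs directly per real root, and it would in fact give a tail somewhat smaller than $Z^{1/d}\log Z$. As written, however, the decisive Diophantine input is gestured at rather than supplied.
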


\begin{proof} We shall follow Heath-Brown's proof of Theorem 8 in \cite{HB1}. Accordingly put 
\[S(Z;C) = | \{ (x,y) \in \bZ^2 : 0 < |F(x,y)| \leq Z, C < \max \{|x|, |y|\} \leq 2C, \gcd(x,y) = 1 \}|.\]
Suppose that
\begin{equation} \label{C1}C \geq Z^{\frac{1}{d}} E_F.  \end{equation}
Heath-Brown observes that by Roth's theorem $S(Z;C) = 0$ unless $C \ll_F Z^2$ .\\

Suppose that we are in the case when (\ref{fact1}) holds and that $(x,y)$ is a pair of integers with $\gcd(x,y)=1$,
\[0 < |F(x,y)| \leq Z\]
and
\begin{equation} \label{C2}C < \max (|x|, |y|) \leq 2C \end{equation} 
Further suppose that $i_0$ is an index for which
\[ |x-\alpha_{i_0}y| \leq |x-\alpha_jy|\]
for $j=1,\dots,d.$ Note that then
\begin{equation} \label{eq3} |x-\alpha_{i_0}y| \leq Z^{1/d}. \end{equation}

We have two cases to consider. The first case is when $\max(|x|,|y|) =|y|$. In this case we have, for $j\ne i_0,$
\begin{equation} \label{eq4} |x-\alpha_jy| = |(x-\alpha_{i_0}y) + (\alpha_{i_0}-\alpha_j)y| \geq |\alpha_{i_0}-\alpha_j||y| - |x-\alpha_{i_0}y| \end{equation}
and, by (\ref{C1}), (\ref{C2}) and (\ref{eq3}),
\begin{equation} \label{eq5} \frac{1}{2}|\alpha_{i_0}- \alpha_j||y| - |x-\alpha_{i_0}y| \geq \frac{1}{2}|\alpha_{i_0}- \alpha_j|Z^{1/d}E_F-Z^{1/d} \geq 0. \end{equation}
Thus, by (\ref{eq4}) and (\ref{eq5}),
\begin{equation} \label{eq6} |x-\alpha_jy| \geq  \frac{1}{2}|\alpha_{i_0}- \alpha_j||y| \geq \frac{1}{2}|\alpha_{i_0}- \alpha_j|C. \end{equation}

The second case is when $\max (|x|,|y|) = |x|.$ Then
\begin{equation} \label{eq7} |\alpha_{i_0}(x-\alpha_jy)| = |((\alpha_{i_0} - \alpha_j)x+ \alpha_j(x-\alpha_{i_0}y)| \geq |\alpha_{i_0}- \alpha_j||x| - |\alpha_j||x - \alpha_{i_0}y|, \end{equation}
and, by (\ref{EF}), (\ref{eq6}) and (\ref{eq7}),
\begin{equation} \label{eq8} \frac{1}{2}|\alpha_{i_0}- \alpha_j||x| - |\alpha_j||x-\alpha_{i_0}y| \geq \frac{1}{2}|\alpha_{i_0}- \alpha_j|Z^{1/d}E_F-|\alpha_j|Z^{1/d} \geq 0. \end{equation}
Thus, by (\ref{eq7}) and (\ref{eq8}),
\begin{equation} \label{eq9} |x-\alpha_jy| \geq  \frac{1}{2|\alpha_{i_0}|}|\alpha_{i_0}- \alpha_j|C. \end{equation}

It now follows from (\ref{C2}), (\ref{eq6}) and (\ref{eq9}) that
\begin{equation} \label{eq10} C \ll_F |x-\alpha_jy| \ll_F C. \end{equation}
We obtain (\ref{eq10}) in a similar fashion when (\ref{fact2}) holds.

Thus, by (\ref{eq10}),
\begin{equation} \label{eq11}  |x-\alpha_{i_0}y| \ll_F Z/C^{d-1}. \end{equation}
The number of coprime integer pairs $(x,y)$ satisfying (\ref{C2}) and (\ref{eq11}) for some index $i_0$ is an upper bound for $S(Z;C)$ and therefore, by Lemma 1, part (vii) of \cite{HB1},
\begin{equation} \label{SZ bound} S(Z;C) \ll_F 1 + \frac{Z}{C^{d-2}}. \end{equation}
Put
\[S^{(1)}(Z;C) = | \{(x,y) \in \bZ^2 : 0 < |F(x,y)| \leq Z, C < \max\{|x|, |y|\}, \gcd(x,y) = 1\}|.\]
Therefore on replacing $C$ by $2^j C$ in (\ref{SZ bound}) for $j = 1, 2, \cdots$ and summing we find that
\[S^{(1)}(Z; C) \ll_F \log Z + \frac{Z}{C^{d-2}}. \]
Next put
\[S^{(2)}(Z;C) = |\{(x,y) \in \bZ^2 : 0 < |F(x,y)| \leq Z, C < \max\{|x|, |y|\} \}|.\]
Then
\[S^{(2)}(Z;C) \ll_F \sum_{h \leq Z^{1/d}} S^{(1)} \left(\frac{Z}{h^d}, \frac{C}{h} \right)\]
and since $C > Z^{\frac{1}{d}} E_F$ we see that
\[\frac{C}{h} > \left(\frac{Z}{h^d}\right)^{\frac{1}{d}} E_F,\]
hence
\begin{align*} S^{(2)}(Z; C) & \ll_F \sum_{h \leq Z^{1/d}} \left(\log Z + \frac{Z}{h^2 C^{d-2}} \right) \\
& \ll_F Z^{\frac{1}{d}} \log Z + \frac{Z}{C^{d-2}}. \end{align*} 
Our result now follows on taking $C = Z^{\frac{1}{d}} \beta$ since $\beta > E_F$. 
\end{proof}
For any positive real numbers $Z$ and $\beta$ put
\[N_F(Z, \beta) = |\{(x,y) \in \bZ^2 :  |F(x,y)| \leq Z, \max\{|x|, |y|\} \leq Z^{\frac{1}{d}} \beta \}|.\]

\begin{lemma} \label{small primes} Let $F$ be a binary form of degree $d \geq 3$ with integer coefficients and non-zero discriminant. Let $E_F$ be as in (\ref{EF}) and suppose that $\beta$ is a real number with $\beta > E_F$. Then
\[N_F(Z, \beta) = A_F Z^{\frac{2}{d}} + O_F\left(Z^{\frac{1}{d-1}} + Z^{\frac{1}{d}} \beta + Z^{\frac{2}{d}} \beta^{-(d-2)} \right).\]
\end{lemma}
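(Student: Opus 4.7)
The plan is to derive the estimate for $N_F(Z,\beta)$ by comparing it with the unrestricted count $N_F(Z)$, for which Lemma \ref{Thunder lemma} supplies an asymptotic. The two counts differ in two ways: pairs lying outside the box $\max\{|x|,|y|\} \leq Z^{1/d}\beta$ are counted by $N_F(Z)$ but not by $N_F(Z,\beta)$, while the integer zeros of $F$ inside the box are counted by $N_F(Z,\beta)$ (since $|F(x,y)|=0 \leq Z$) but not by $N_F(Z)$ (which imposes $|F(x,y)| \geq 1$).

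Concretely I would write
\[N_F(Z,\beta) = N_F(Z) - M(Z,\beta) + V(Z,\beta),\]
where $M(Z,\beta)$ is the number of pairs $(x,y) \in \bZ^2$ with $1 \leq |F(x,y)| \leq Z$ and $\max\{|x|,|y|\} > Z^{1/d}\beta$, and $V(Z,\beta)$ is the number of pairs with $F(x,y) = 0$ and $\max\{|x|,|y|\} \leq Z^{1/d}\beta$. Since $\beta > E_F$, Lemma \ref{cusp lemma} bounds $M(Z,\beta)$ by $O_F(Z^{1/d}\log Z + Z^{2/d}\beta^{-(d-2)})$, which is already of the desired shape up to the logarithmic factor.

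To bound $V(Z,\beta)$, I would use that $F$ has non-zero discriminant and integer coefficients, so its integer zeros other than $(0,0)$ lie on those lines $\gamma_i x + \beta_i y = 0$ arising in (\ref{factorization}) whose coefficients are proportional to a rational pair, and there are at most $d$ such lines. On each of them the number of integer points with $\max\{|x|,|y|\} \leq Z^{1/d}\beta$ is $O(Z^{1/d}\beta)$, so $V(Z,\beta) = O_F(Z^{1/d}\beta)$.

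Combining these with $N_F(Z) = A_F Z^{2/d} + O_F(Z^{1/(d-1)})$ from Lemma \ref{Thunder lemma} produces a total error of $O_F(Z^{1/(d-1)} + Z^{1/d}\log Z + Z^{2/d}\beta^{-(d-2)} + Z^{1/d}\beta)$. Since $d \geq 3$ implies $Z^{1/d}\log Z = O(Z^{1/(d-1)})$ for all sufficiently large $Z$, the logarithmic contribution is absorbed and the claimed bound follows. No step is a serious obstacle; the one point requiring care is the bookkeeping that distinguishes $|F(x,y)|\geq 1$ from $|F(x,y)|=0$ when converting $N_F(Z)$ into $N_F(Z,\beta)$.
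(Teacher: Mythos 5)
Your proposal is correct and is essentially the paper's own argument: the paper proves this lemma in one line by combining Lemma \ref{Thunder lemma} with Lemma \ref{cusp lemma} and noting that the zeros of $F$ in the box contribute $O_F\left(Z^{\frac{1}{d}}\beta\right)$, exactly the decomposition $N_F(Z,\beta)=N_F(Z)-M(Z,\beta)+V(Z,\beta)$ you spell out. Your explicit absorption of $Z^{\frac{1}{d}}\log Z$ into $Z^{\frac{1}{d-1}}$ (valid since $\frac{1}{d-1}-\frac{1}{d}=\frac{1}{d(d-1)}>0$) is a step the paper leaves implicit, so you have simply made the same proof more detailed.
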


\begin{proof} This follows from Lemma \ref{Thunder lemma} and Lemma \ref{cusp lemma} on noting that the number of pairs of integers $(x,y)$ with $\max\{|x|, |y|\} \leq Z^{\frac{1}{d}} \beta$ for which $F(x,y) = 0$ is at most $O_F\left(Z^{\frac{1}{d}} \beta \right)$. 
\end{proof}

Next we introduce the quantity
\[\tilde{N}_F(Z, \beta) = |\{(x,y) \in \bZ^2 : |F(x + \theta_1, y + \theta_2)| \leq Z, \]

\[0 \leq \theta_1 < 1, 0 \leq \theta_2 < 1, \max\{|x + \theta_1|, |y + \theta_2| \} \leq Z^{\frac{1}{d}} \beta \}|\]
in order to facilitate the determination of the main terms in Theorems \ref{MT1} and \ref{MT2}.

\begin{lemma} \label{n-tilde} Let $F$ be a binary form with integer coefficients, non-zero discriminant and degree $d \geq 3$. Let $E_F$ be as in (\ref{EF}) and suppose that $\beta$ is a real number with $\beta > E_F$. Then
\[\tilde{N}_F(Z, \beta) = A_F Z^{\frac{2}{d}} + O_F \left(Z^{\frac{1}{d-1}} + Z^{\frac{2}{d}} \beta^{-(d-2)} + Z^{\frac{1}{d}} \beta^{d-1} \right).\]
\end{lemma}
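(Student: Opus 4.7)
The plan is to sandwich $\tilde{N}_F(Z,\beta)$ between two quantities of the form $N_F(Z_\pm, \beta_\pm)$, each mildly perturbed from $N_F(Z,\beta)$, so that Lemma \ref{small primes} applies. The key observation is that $\tilde{N}_F(Z,\beta)$ counts pairs $(x,y) \in \bZ^2$ such that the translated half-open unit square $(x,y) + [0,1)^2$ meets the region
$$R := \{(u,v) \in \bR^2 : |F(u,v)| \leq Z \text{ and } \max\{|u|,|v|\} \leq Z^{1/d}\beta\}.$$

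For the upper bound, homogeneity of $F$ gives $|\nabla F(u,v)| \ll_F \max\{|u|,|v|,1\}^{d-1}$. Throughout the unit square at a lattice point $(x,y)$ whose translate meets $R$, this gradient is therefore $O_F(Z^{(d-1)/d}\beta^{d-1})$, since the square lies in $\{\max\{|u|,|v|\} \leq Z^{1/d}\beta + 1\}$ and $\beta > E_F \geq 2$. The mean value theorem applied along a segment in this square yields $|F(x,y)| \leq Z + C_F Z^{(d-1)/d}\beta^{d-1} =: Z_+$ together with $\max\{|x|,|y|\} \leq Z^{1/d}\beta + 1$. Setting $\beta_+ := (Z^{1/d}\beta + 1)/Z_+^{1/d}$, this gives $\tilde{N}_F(Z,\beta) \leq N_F(Z_+, \beta_+)$. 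Symmetrically, any $(x,y) \in \bZ^2$ with $|F(x,y)| \leq Z_- := \max\{Z - C_F Z^{(d-1)/d}\beta^{d-1},\,0\}$ and $\max\{|x|,|y|\} \leq Z^{1/d}\beta - 1$ is counted by $\tilde{N}_F(Z,\beta)$ (take $\theta_1 = \theta_2 = 0$), so $\tilde{N}_F(Z,\beta) \geq N_F(Z_-, \beta_-)$ for the corresponding $\beta_-$.

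Next I apply Lemma \ref{small primes} on both sides, valid once $\beta_\pm > E_F$, which holds whenever $\beta$ exceeds $E_F$ by a fixed amount and $Z$ is large (finitely many small $Z$ being absorbed into the implicit constant). The elementary inequality $(1+u)^{2/d} = 1 + O(u)$ for $0 \leq u \leq 1$ gives $A_F Z_\pm^{2/d} = A_F Z^{2/d} + O_F(Z^{1/d}\beta^{d-1})$ in the regime where the perturbation $C_F Z^{(d-1)/d}\beta^{d-1}$ is at most $Z$; in the complementary regime $\beta^{d-1} \geq Z^{1/d}/C_F$, the target error $Z^{1/d}\beta^{d-1}$ already dominates $Z^{2/d}$ and the estimate is trivial. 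A parallel case analysis absorbs the residual error terms $Z_\pm^{1/(d-1)}$ and $Z_\pm^{2/d}\beta_\pm^{-(d-2)}$ from Lemma \ref{small primes} into $O_F(Z^{1/(d-1)} + Z^{2/d}\beta^{-(d-2)} + Z^{1/d}\beta^{d-1})$.

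The main obstacle is this careful bookkeeping of error terms across the two regimes of $\beta$ relative to $Z^{1/(d(d-1))}$; conceptually the argument is short, but one must confirm in each regime that the transformed errors are no worse than the stated bound. The new term $Z^{1/d}\beta^{d-1}$, absent in Lemma \ref{small primes}, is the dominant contribution and arises directly from the maximum change in $|F|$ under a unit coordinate shift inside the box of side length $Z^{1/d}\beta$, via the homogeneous gradient bound.
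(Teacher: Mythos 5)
Your proposal is correct and takes essentially the same route as the paper: the paper likewise bounds the change in $|F|$ over a unit square by $\kappa\max\{|x|,|y|\}^{d-1} = O_F\left(Z^{(d-1)/d}\beta^{d-1}\right)$, sets $Z_1 = Z - \kappa\left(Z^{1/d}\beta\right)^{d-1}$, applies Lemma \ref{small primes}, and expands $Z_1^{2/d} = Z^{2/d} + O_F\left(Z^{1/d}\beta^{d-1}\right)$, needing only the one-sided perturbation because the upper bound $\tilde{N}_F(Z,\beta) \leq N_F(Z,\beta)$ is immediate (it is the lower bound, not the upper, for which the intended ``for all $\theta_1,\theta_2$'' reading of the definition requires the gradient argument rather than taking $\theta_1=\theta_2=0$, a quantifier you have slightly misread but which your symmetric sandwich covers anyway). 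Your two-sided version, together with the explicit case analysis in the degenerate regime $\beta^{d-1} \gg Z^{1/d}$ that the paper glosses over, is a mildly more careful rendering of the same argument.
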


\begin{proof} Note that there is a positive number $\kappa$, which depends on $F$, such that for $(\theta_1, \theta_2) \in \bR^2$ with $|\theta_i| \leq 1$ for $i = 1,2$ we have 
\[|F(x + \theta_1, y + \theta_2)| \leq |F(x,y)| + \kappa \max \{|x|, |y|\}^{d-1}.\]
Put
\begin{equation} \label{Z1} Z_1 = Z - \kappa \left(Z^{\frac{1}{d}} \beta \right)^{d-1}. \end{equation}
Thus if $\max\{|x|, |y|\} \leq Z^{\frac{1}{d}} \beta$ and 
\[|F(x,y)| \leq Z_1\]
then, for $(\theta_1, \theta_2) \in \bR^2$ with $|\theta_i| \leq 1$ for $i = 1,2$, we have
\begin{equation} \label{near Z} |F(x + \theta_1, y + \theta_2)| \leq Z. \end{equation}
Observe that
\begin{equation} \label{Z1 near Z} Z_1^{\frac{2}{d}} = \left(Z - \kappa \left(Z^{\frac{1}{d}} \beta \right)^{d-1} \right)^{\frac{2}{d}} = Z^{\frac{2}{d}} + O_F \left(Z^{\frac{1}{d}} \beta^{d-1} \right).\end{equation}
Thus, since $Z_1 \leq Z$, by Lemma \ref{small primes} the number of pairs of integers $(x,y)$ for which $\max\{|x|, |y|\} \leq Z_1^{\frac{1}{d}} \beta$ and (\ref{near Z}) holds is at least
\begin{equation} \label{lower bound} A_F Z_1^{\frac{2}{d}} + O_F \left(Z_1^{\frac{1}{d-1}} + Z_1^{\frac{1}{d}} \beta + Z_1^{\frac{2}{d}} \beta^{-(d-2)} \right), \end{equation}
and so by (\ref{Z1 near Z}) and (\ref{lower bound}), at least
\begin{equation} \label{lower bound Z} A_F Z^{\frac{2}{d}} + O_F \left(Z^{\frac{1}{d-1}} + Z^{\frac{2}{d}} \beta^{-(d-2)} + Z^{\frac{1}{d}} \beta^{d-1} \right). \end{equation}
Therefore, by Lemma \ref{small primes} and (\ref{lower bound Z}), 
\[\tilde{N}_F(Z, \beta) = A_F Z^{\frac{2}{d}} + O_F \left(Z^{\frac{1}{d-1}} + Z^{\frac{2}{d}} \beta^{- (d-2)} + Z^{\frac{1}{d}} \beta^{d-1} \right),\]
as required. 
\end{proof} 

We now put, for a real number $Z$, an integer $k$ with $k \geq 2$ and positive numbers $\gamma$ and $\beta$, 
\[N_{F,k}(Z, \gamma, \beta) = |\{(x,y) \in \bZ^2 : |F(x,y)| \leq Z, \max\{|x|, |y|\} \leq Z^{\frac{1}{d}} \beta \text{ and } \]
\[ F(x,y) \text{ is not divisible by } p^k \text{ for any prime } p \text{ with } p \leq \gamma \}|,\]
and
\[N_{F,k}(Z, \gamma) = |\{(x,y) \in \bZ^2 : 0 < |F(x,y)| \leq Z \text{ and } F(x,y) \text{ is not divisible by } \]
\[p^k \text{ for any prime } p \text{ with } p \leq \gamma \}|.\]

\begin{lemma} \label{partial NF} Let $F$ be a binary form with integer coefficients, non-zero discriminant and degree $d$ with $d\geq 3$. Then
\[N_{F,k}\left(Z, \frac{1}{2kd} \log Z \right) = c_{F,k} Z^{\frac{2}{d}} + O_{F,k} \left(Z^{\frac{2}{d}} / (\log Z \log \log Z) \right)\]
with $c_{F,k}$ given by (\ref{main constant 1}).
\end{lemma}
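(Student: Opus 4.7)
The plan is a classical sieve-of-Eratosthenes argument at level $\gamma = \frac{1}{2d}\log Z$. Writing $P(\gamma) = \prod_{p \leq \gamma} p$ and applying the M\"obius identity
$$\one\bigl[p^k \nmid F(x,y)\text{ for all } p \leq \gamma\bigr] = \sum_{\substack{m \mid P(\gamma),\,\mu^2(m)=1 \\ m^k \mid F(x,y)}} \mu(m),$$
I expand $N_{F,k}(Z, \gamma) = \sum_m \mu(m) M(m)$, where $M(m)$ counts pairs $(x,y) \in \bZ^2$ with $0 < |F(x,y)| \leq Z$ and $m^k \mid F(x,y)$.

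First I would truncate using Lemma \ref{cusp lemma}: choosing $\beta$ so that $\beta^{d-2} \asymp \log Z \log \log Z$, pairs with $\max\{|x|, |y|\} > Z^{1/d}\beta$ contribute (even unsieved, and hence uniformly over $m$) only $O_F(Z^{1/d}\log Z + Z^{2/d}\beta^{-(d-2)}) = O_F(Z^{2/d}/(\log Z \log \log Z))$. For each squarefree $m \mid P(\gamma)$ I then decompose the box-restricted count $M_{\text{box}}(m)$ according to the residue class $(a,b) \pmod{m^k}$ of $(x,y)$, of which $\rho_F(m^k)$ satisfy $F(a,b) \equiv 0 \pmod{m^k}$. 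Writing $x = m^k x' + a$, $y = m^k y' + b$ and invoking the homogeneity
$F(m^k x' + a, m^k y' + b) = m^{kd} F\bigl(x' + a/m^k, y' + b/m^k\bigr)$
transforms each class into exactly the setting of Lemma \ref{n-tilde} with $Z$ replaced by $Z/m^{kd}$ and shifts $\theta_i = a/m^k,\,b/m^k \in [0,1)$. Lemma \ref{n-tilde} then yields
$$M_{\text{box}}(m) = \frac{\rho_F(m^k)}{m^{2k}} A_F Z^{2/d} + O_F\!\left(\rho_F(m^k)\left[\frac{Z^{1/(d-1)}}{m^{kd/(d-1)}} + \frac{Z^{2/d}}{m^{2k}\beta^{d-2}} + \frac{Z^{1/d}\beta^{d-1}}{m^k}\right]\right).$$

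Summing the main term against $\mu(m)$ over squarefree $m \mid P(\gamma)$ produces the partial Euler product $A_F Z^{2/d} \prod_{p \leq \gamma}\bigl(1 - \rho_F(p^k)/p^{2k}\bigr) = A_F Z^{2/d}\,\lambda_{F,k}(\gamma)$. The bound $\rho_F(p^k)/p^{2k} \leq 1/p^2 + d/p^k$ (for $p \nmid \Delta(F)$), combined with the estimate $\sum_{p > \gamma} 1/p^2 \ll 1/(\gamma \log \gamma)$ that follows from Abel summation and the prime number theorem, shows that $\lambda_{F,k} - \lambda_{F,k}(\gamma) = O_F\bigl(1/(\log Z \log \log Z)\bigr)$. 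With $\gamma = \frac{1}{2d}\log Z$ one has $1/(\gamma\log\gamma) \asymp 1/(\log Z \log \log Z)$, matching the required main-term error precisely.

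The main obstacle will be controlling the three sieve-error sums. The middle one is immediate from the absolutely convergent Euler product $\sum_m \rho_F(m^k)/m^{2k} \leq \prod_p\bigl(1 + \rho_F(p^k)/p^{2k}\bigr) = O_F(1)$, contributing $O_F(Z^{2/d}/\beta^{d-2})$, which is within tolerance by the choice of $\beta$. The first and third sums are more delicate because the series $\sum \rho_F(m^k)/m^{kd/(d-1)}$ and $\sum \rho_F(m^k)/m^k$ need not converge as Euler products once $(k-2)(d-2)$ grows. I would handle these by exploiting both the uniform bound $\rho_F(p^k) \ll_F p^{2k-2}$ for $p \nmid \Delta(F)$ and $k \geq 2$, and the fact that any squarefree $m \mid P(\gamma)$ satisfies $m \leq \exp(\theta(\gamma)) = Z^{1/(2d)+o(1)}$; splitting the $m$-range at a threshold $M_0 = Z^{\eta}$ for a small $\eta > 0$, using Lemma \ref{n-tilde} for $m \leq M_0$ and the cruder uniform estimate $M_{\text{box}}(m) \ll_F \rho_F(m^k)\bigl(1 + (Z/m^{kd})^{2/d}\beta^2\bigr)$ for $m > M_0$, a direct parameter-tracking computation (using that the number of squarefree $m \mid P(\gamma)$ is $Z^{o(1)}$) shows that both contributions are $O_F(Z^{2/d - \eta'})$ for some $\eta' = \eta'(d,k) > 0$, comfortably within the required error.
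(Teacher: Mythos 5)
Your global architecture is sound as far as it goes: the single truncation to the box via Lemma \ref{cusp lemma}, the exact M\"obius expansion over squarefree $m \mid P(\gamma)$, the passage to shifted counts $\tilde{N_F}(Z/m^{kd},\beta)$ with $\theta_i = a/m^k, b/m^k$ (which is exactly how Lemma \ref{n-tilde} is meant to be used, uniformly in the shifts), the recovery of the partial Euler product from multiplicativity of $\rho_F$, and the tail comparison via $\sum_{p>\gamma}p^{-2} \ll 1/(\gamma\log\gamma)$ all check out, and the last step coincides with the paper's own treatment of $P - P_1$. The genuine gap is your large-modulus range. For squarefree $m$ with $k \leq d$, every residue class $(a,b) \bmod m^k$ with $m \mid \gcd(a,b)$ satisfies $F(a,b) \equiv 0 \pmod{m^k}$, since $F(pa',pb') = p^d F(a',b')$; hence $\rho_F(m^k) \geq m^{2k-2}$. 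The ``$+1$ per class'' term in your crude bound therefore contributes, from the single modulus $m = P(\gamma) = Z^{1/(2d)+o(1)}$ alone, at least $P(\gamma)^{2k-2} = Z^{(k-1)/d+o(1)}$, where the $Z^{o(1)}$ is of size $\exp(O(\log Z/\log\log Z))$ and so dwarfs any power of $\log Z$. For $k \geq 4$ this exceeds the main term $Z^{2/d}$ outright, and for $k=3$ it sits at $Z^{2/d}$ up to that indeterminate factor, so it cannot be absorbed into the required error $O(Z^{2/d}(\log Z\log\log Z)^{-1})$ --- and $k=3$ is needed in this paper for the case $(k,r)=(3,8)$, while arbitrarily large $k \leq d$ occur under (\ref{k inequality}). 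Nor does shifting $M_0$ save you: if instead you apply Lemma \ref{n-tilde} on the whole range, the third error term $Z^{1/d}\beta^{d-1}\rho_F(m^k)/m^k$ at $m = P(\gamma)$ is of size $Z^{1/d+(k-2)/(2d)+o(1)}$, which overtakes $Z^{2/d}$ once $k \geq 4$. The defect is structural: the classes with $p \mid \gcd(a,b)$ are numerous ($p^{2k-2}$ of them mod $p^k$) but must be counted through the single lattice $p\bZ^2$, giving $O(Z^{2/d}\beta^2/p^2)$ box points in total rather than one point apiece, and the remaining classes lie on at most $d$ lattices $a \equiv \theta b \pmod{p^k}$ whose box counts carry a short-vector term. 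Closing the tail thus requires either this gcd-and-lattice grouping together with a Greaves-style $M(\theta,p^k)$ analysis (the machinery the paper deploys in Section 4 for a different purpose), or the replacement of exact inclusion--exclusion by a fundamental-lemma sieve of level $Z^{\eta}$, confining all moduli to $m \leq Z^{\eta}$ at the cost of an acceptable factor $1 + O(\exp(-c\eta\log Z/\log\log Z))$. As written, your ``direct parameter-tracking computation'' does not exist for $k \geq 3$.

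It is worth seeing how the paper avoids this entirely: it never sums lattice counts over moduli. After truncating to $\max\{|x|,|y|\} \leq Z^{1/d}(\log Z)^6$ by Lemma \ref{cusp lemma}, it tiles the box by squares of side $V = \prod_{p \leq \gamma}p = Z^{1/(2d)+o(1)}$, counts the admissible squares by a \emph{single} application of Lemma \ref{n-tilde} at parameter $Z/V^d$, and evaluates the sieved count inside each admissible square in one stroke by the Chinese Remainder Theorem, producing the partial product $\prod_{p\leq\gamma}\bigl(1-\rho_F(p^k)/p^{2k}\bigr)$ directly; only then does it compare this partial product with $\lambda_{F,k}$, exactly as you do. One application of Lemma \ref{n-tilde} at scale $V$ thus replaces your $2^{\pi(\gamma)}$ separate lattice counts, and the divergent sums over $m$ that sink your tail range never arise.
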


\begin{proof} We have
\begin{align*} N_{F,k} \left(Z, \frac{1}{2kd} \log Z  \right) & = N_{F,k} \left(Z, \frac{1}{2kd} \log Z, (\log Z)^6 \right)  + O_{F,k} \left(Z^{\frac{1}{d}} (\log Z)^6 \right)  \\
& + O_{F,k} \left(|\{(x,y) \in \bZ^2 : 0 < |F(x,y)| \leq Z \text{ and } \max\{|x|, |y|\} > Z^{\frac{1}{d}} (\log Z)^6 \}| \right).
\end{align*}
By Lemma \ref{cusp lemma}, since $ d \geq 3$,
\begin{equation} \label{small NF} N_{F,k}\left(Z, \frac{1}{2kd} \log Z \right) = N_{F,k} \left(Z, \frac{1}{2kd} \log Z, (\log Z)^6 \right) + O_{F,k} \left(Z^{\frac{2}{d}}/ (\log Z)^6 \right). \end{equation}
Next we put
\[V = V(d,k,Z) = \prod_{p \leq \log Z / (2kd)} p^k,\]
where the product is taken over primes $p$. For each pair of integers $(a,b)$ we define $B(a,b)$ by 
\[B(a,b) = \{(x,y) \in \bR^2 : aV \leq x < (a+1) V, bV \leq y < (b+1) V \}.\]
Observe that $B(a,b)$ is a square in $\bR^2$. We say that $B(a,b)$ is admissible if 
\begin{equation} \label{admissible} |F(x,y)| \leq Z \text{ and } \max\{|x|, |y|\} \leq Z^{\frac{1}{d}} (\log Z)^6 \end{equation} 
whenever $(x,y)$ is in $B(a,b)$. Let $B_1$ denote the number of admissible squares $B(a,b)$. We have
\[B_1 = \tilde{N}_F \left(\frac{Z}{V^d}, (\log Z)^6 \right)\]
and so by Lemma \ref{n-tilde}, since $d \geq 3$, 
\begin{equation} \label{B1} B_1 = A_F \frac{Z^{\frac{2}{d}}}{V^2} + O_F \left( \left(\frac{Z}{V^d} \right)^{\frac{1}{d-1}} + \frac{Z^{\frac{2}{d}}}{V^2 (\log Z)^6} + \frac{Z^{\frac{1}{d}}}{V} (\log Z)^{6(d-1)} \right) \end{equation}
Therefore by Lemma \ref{small primes} with $\beta = (\log Z)^6$ the number of pairs of integers $(x,y)$ for which (\ref{admissible}) holds and which are not in an admissible square is 
\begin{equation} \label{not square} O_F \left(Z^{\frac{1}{d-1}} V^{\frac{d-2}{d-1}} + Z^{\frac{2}{d}} /(\log Z)^{6} + Z^{\frac{1}{d}} V (\log Z)^{6(d-1)} \right). \end{equation}
We may now apply the Chinese Remainder Theorem to conclude that within each admissible square the number of integer pairs $(x,y)$ for which $F(x,y)$ is not divisible by $p^k$ for any prime $p$ with $p \leq \frac{1}{2kd} \log Z$ is precisely 
\[\prod_{p \leq \log Z/(2kd)} \left(1 - \frac{\rho_F(p^k)}{p^{2k}} \right) V^2. \]
Thus the number of integer pairs $(x,y)$ in an admissible square with $F(x,y)$ not divisible by $p^k$ for any prime $p$ with $p \leq \frac{1}{2kd} \log Z$ is
\begin{equation} \label{good square} B_1  \prod_{p \leq \log Z/(2kd)} \left(1 - \frac{\rho_F(p^k)}{p^{2k}} \right) V^2. \end{equation}
Therefore, by (\ref{B1}), (\ref{not square}) and (\ref{good square}), 
\begin{equation} \label{small prime 2} N_{F,k}\left(Z, \frac{1}{2kd} \log Z \right) = A_F \prod_{p \leq \log Z/(2kd)} \left(1 - \frac{\rho_F(p^k)}{p^{2k}} \right) Z^{\frac{2}{d}} + \end{equation}
\[ O_{F,k} \left(Z^{\frac{1}{d-1}} V^{\frac{d-2}{d-1}} + Z^{\frac{1}{d}} V (\log Z)^{6(d-1)} + Z^{\frac{2}{d}} /(\log Z)^{6} \right).\]
By the Prime Number Theorem,
\[ V = O \left(Z^{\frac{1}{2d} + \frac{1}{d^2}} \right) \]
and so, by (\ref{small prime 2}), 
\begin{equation} \label{PNT} N_{F,k}\left(Z, \frac{1}{2kd} \log Z \right) = A_F \prod_{p \leq \log Z/(2kd)} \left(1 - \frac{\rho_F(p^k)}{p^{2k}} \right) Z^{\frac{2}{d}} + O_{F,k} \left(Z^{\frac{2}{d}}/ (\log Z)^{6} \right). \end{equation}

Note that the number of integer pairs $(a,b)$ with $0 \leq a < p^k$ and $0 \leq b < p^k$ for which $p$ divides both $a$ and $b$ is $p^{2k-2}$. Further the number of pairs $(a,b)$ for which $p$ does not divide both $a$ and $b$ and for which $F(a,b) \equiv 0 \pmod{p^k}$ is at most $dp^k$ provided that $p$ does not divide $\Delta(F)$, see \cite{ST1}. Thus for primes $p$ which do not divide $\Delta(F)$, we have
\begin{equation} \label{rho} \rho_F(p^k) \leq p^{2k-2} + dp^k \leq (d+1) p^{2k-2}, \end{equation}
since $k \geq 2$. Put
\[P = \prod_{p \leq \log Z/(2kd)} \left(1 - \frac{\rho_F(p^k)}{p^{2k}} \right),   P_1 = \prod_p \left(1 - \frac{\rho_F(p^k)}{p^{2k}} \right)\]
and
\[t = \sum_{p > \log Z/(2kd)} \log \left(1 - \frac{\rho_F(p^k)}{p^{2k}} \right).\] 
Then
\[P_1 - P = P(e^t - 1) = -P \left(-t - \frac{t^2}{2!} - \frac{t^3}{3!} - \cdots \right).\]
Since $t$ is negative,
\begin{equation} \label{partial product} 0 \leq P - P_1 \leq -Pt. \end{equation}
Further,
\[-t = O_{F,k} \left(\sum_{p > \log Z/(2kd)} \frac{\rho_F(p^k)}{p^{2k}} \right) \]
and by (\ref{rho}), 
\begin{equation} \label{t bound} -t = O_{F,k} \left(\sum_{p > \log Z/(2kd)} \frac{1}{p^2} \right). \end{equation}
We have
\[\sum_{p > \log Z/(2kd)} \frac{1}{p^2} = \sum_{j=0}^\infty \sum_{2^j \frac{\log Z}{2kd} < p < 2^{j+1} \frac{\log Z}{2kd}} \frac{1}{p^2}\]
and so, by the Prime Number Theorem,
\begin{equation} \label{PNT2} \end{equation}
\begin{align*} \sum_{p > \log Z/(2kd)} \frac{1}{p^2} & = O_k \left(\sum_{j=0}^\infty \left(\frac{2^{j+1} \log Z}{(j+1) \log 2 + \log \log Z} \right) \frac{1}{2^{2j} (\log Z)^2} \right) \\
& = O_k \left(\frac{1}{\log Z \log \log Z} \right).
\end{align*}
Therefore, by (\ref{partial product}), (\ref{t bound}) and (\ref{PNT2}), 
\begin{equation} \label{P estimate} P = P_1 + O_{F,k} \left(\frac{1}{\log Z \log \log Z} \right).\end{equation}
It now follows from (\ref{PNT}) and (\ref{P estimate}) that
\begin{equation} \label{small prime 3} N_{F,k}\left(Z, \frac{1}{2kd} \log Z\right) = c_{F,k} Z^{\frac{2}{d}} + O_{F,k} \left(Z^{\frac{2}{d}} /(\log Z \log \log Z) \right), \end{equation}
as required. \end{proof}

We say that an integer $h$ is \emph{essentially represented} by $F$ if whenever $(x_1, y_1), (x_2, y_2)$ are in $\bZ^2$ and
\[F(x_1, y_1) = F(x_2, y_2) = h\]
then there exists $A$ in $\Aut F$ such that
\[A \binom{x_1}{y_1} = \binom{x_2}{y_2}.\]
We remark that if there is only one integer pair $(x,y)$ for which $F(x,y) = h$ then $h$ is essentially represented since $I$ is in $\Aut F$. \\

For any positive number $Z$ let $R_F^{(2)} (Z)$ denote the number of integers $h$ with $0 < |h| \leq Z$ which are not essentially represented by $F$. For each binary form $F$ with integer coefficients, non-zero discriminant and degree $d$ with $d\geq 3$ we define $\beta_F$ in the following way. If $F$ has a linear factor in $\bR [x,y]$ we put

\begin{equation} \label{Beta D} \beta_F = \begin{cases} \dfrac{12}{19} & \text{if } d = 3 \text{ and } F \text{ is irreducible over $\bQ$} \\ \\
\dfrac{4}{7} & \text{if } d = 3 \text{ and } F \text{ has exactly one linear factor over $\bQ$} \\ \\
\dfrac{5}{9} & \text{if } d = 3 \text{ and } F \text{ has three linear factors over $\bQ$} \\ \\

\dfrac{3}{(d-2)\sqrt{d} + 3} & \text{if } 4 \leq d \leq 8 \\ \\ 
\dfrac{1}{d-1} & \text{if } d \geq 9. \end{cases}
\end{equation}
If $F$ does not have a linear factor over $\bR$ then $d$ is even and we put

\begin{equation} \label{Beta DD} \beta_F = \begin{cases} \dfrac{3}{8} & \text{if } d = 4 \\ \\
\dfrac{1}{2\sqrt{6} } & \text{if } d=6\\ \\ 
\dfrac{1}{d-1} & \text{if } d \geq 8. \end{cases}
\end{equation}

In \cite{SX}, Stewart and Xiao, building on work of Heath-Brown \cite{HB1}, Salberger \cite{S1}, \cite{S2} and Colliot-Th\'el\`ene \cite{HB1}, proved the following result. 

\begin{lemma} \label{SX non-est} Let $F$ be a binary form with integer coefficients, non-zero discriminant and degree $d$ with $d \geq 3$. Then for each $\ep > 0$, 
\[R_F^{(2)}(Z)  = O_{F,\ep} \left(Z^{\beta_F + \ep}\right)\]
where $\beta_F$ is given by (\ref{Beta D}) and (\ref{Beta DD}). 
\end{lemma}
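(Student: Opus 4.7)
The plan is to follow the approach of Stewart--Xiao~\cite{SX}, combining the geometry of the ``collision variety'' $F(x_1,y_1)=F(x_2,y_2)$ with the global determinant method of Heath-Brown~\cite{HB1}, Salberger~\cite{S1,S2}, and the Colliot-Th\'el\`ene appendix to~\cite{HB1}. Each non-essentially-represented integer $h$ with $0<|h|\leq Z$ yields at least two integer solutions $P_1=(x_1,y_1)$, $P_2=(x_2,y_2)$ of $F(x,y)=h$ that lie in distinct $\Aut F$-orbits. By Lemma~\ref{cusp lemma} applied with $\beta$ a suitable power of $\log Z$, after absorbing a negligible error term, we may restrict both $P_1,P_2$ to the box $\max\{|x|,|y|\}\leq Z^{1/d}\beta$. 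Hence, up to a factor controlled by $|\Aut F|$, $R_F^{(2)}(Z)$ is bounded above by the number of \emph{sporadic pairs}: integer pairs $(P_1,P_2)$ in this box with $F(P_1)=F(P_2)\neq 0$ and $P_2\notin\Aut F\cdot P_1$.

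Next I would analyse the affine threefold $V\subset\bA^4$ cut out by $F(x_1,y_1)=F(x_2,y_2)$. Its top-dimensional irreducible components include, for each $A\in\Aut F$, the graph $\Gamma_A=\{(P,AP):P\in\bA^2\}$. The remaining components form the \emph{sporadic locus} $V^{\mathrm{sp}}$. Using the factorisation $F=\prod_{i=1}^d(\gamma_i x+\beta_i y)$ over $\bar{\bQ}$ together with a case analysis of how the irreducible $\bQ$-factors of $F$ interact under the $\Gal(\bar{\bQ}/\bQ)$-action on linear factors, each component of $V^{\mathrm{sp}}$ can be shown to be absolutely irreducible, two-dimensional, with degree controlled by $d$ and the maximum irreducible-factor degree $r$.

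The counting step then applies the determinant method to $V^{\mathrm{sp}}$, viewing sporadic integer pairs as integer points in a box of side $\asymp Z^{1/d}$. For $d=3$ the exponent $12/19$ is the refined plane-cubic count of Heath-Brown~\cite{HB1}; for $4\leq d\leq 8$ the hybrid exponent $3/((d-2)\sqrt{d}+3)$ arises from $p$-adic localisation at a prime of optimal size combined with Salberger's determinant estimate; and for $d\geq 9$ the exponent $1/(d-1)$ is the output of Salberger's global determinant method applied to surfaces in affine four-space whose degree is essentially $d$. Summing the resulting bounds over $P_1$ in the box and dividing by the orbit count yields the claimed estimate $O_{F,\ep}(Z^{\beta_d+\ep})$ with $\beta_d$ as in~(\ref{beta d}).

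The principal obstacle is the geometric step: one must verify that after removing the graphs $\Gamma_A$ every remaining component of $V$ is absolutely irreducible of the right degree, uniformly in $F$. For forms with multiple irreducible $\bQ$-factors of the same degree, or with large Galois-theoretic symmetries among the linear factors, spurious low-degree components can appear and must be identified with subgroups of $\Aut F$ so that they are already accounted for. Once this geometric analysis is in place, the determinant method estimates of Heath-Brown, Salberger, and Colliot-Th\'el\`ene can be invoked to yield the stated exponents and complete the proof.
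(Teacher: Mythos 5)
You should note first that the paper does not actually prove this lemma: it is imported verbatim from \cite{SX}, with only the remark that the proof rests on the $p$-adic determinant method of \cite{HB1}. Your attempt is therefore measured against the argument of \cite{SX}, and your top-level strategy --- pass to pairs $(P_1,P_2)$ with $F(P_1)=F(P_2)$, separate the solutions coming from $\Aut F$, and count the rest by the determinant method --- is indeed the strategy used there. But two of your steps, as written, fail.

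First, the reduction to a box is quantitatively fatal. You take $\beta$ ``a suitable power of $\log Z$'' and call the discarded range ``a negligible error term''; but Lemma \ref{cusp lemma} then leaves an error of size $Z^{2/d}(\log Z)^{-A}$, and since $\beta_d<2/d$ strictly for every $d\geq 3$ (e.g.\ $12/19<2/3$), this is nowhere near $O_{F,\ep}(Z^{\beta_d+\ep})$: the lemma asserts a power saving, so the truncation error must itself save a power of $Z$. One must take $\beta=Z^{\alpha}$ and balance the cusp error $Z^{2/d}\beta^{-(d-2)}$ of Lemma \ref{cusp lemma} against the determinant-method count in the box of side $B=Z^{1/d}\beta$. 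This balance is precisely what produces the shape of (\ref{beta d}): against Salberger's bound $O(B^{3/\sqrt{d}+\ep})$ for points off lines on a degree-$d$ surface it yields exactly $3/((d-2)\sqrt{d}+3)$ --- note that the $(d-2)$ there \emph{is} the cusp exponent, a connection your sketch never makes --- and for $d=3$, balancing against the bound $O(B^{12/7+\ep})$ from Salberger's work on cubic hypersurfaces \cite{S3} gives $\beta=Z^{2/57}$ and the exponent $12/19$. Your attribution of $12/19$ to ``the refined plane-cubic count of Heath-Brown'' is wrong on both counts: the relevant object is a cubic \emph{surface} in $\bP^3$, and Heath-Brown's $O(B^{52/27+\ep})$ for cubic surfaces would not suffice.

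Second, the geometric setup is dimensionally incorrect. The hypersurface $V\subset\bA^4$ has all its irreducible components of dimension $3$, while each graph $\Gamma_A$ has dimension $2$; so no $\Gamma_A$ is a component of $V$, your ``sporadic locus'' is not obtained by deleting components, and deleting components would in any case not remove the automorphic solutions, since each $\Gamma_A$ lies inside components that also carry sporadic points. The workable formulation, which is the one in \cite{SX}, is projective: $F(x_1,y_1)-F(x_2,y_2)=0$ defines a surface $X$ of degree $d$ in $\bP^3$, the graphs $\Gamma_A$ become \emph{lines on} $X$, and one counts rational points of height at most $B$ on $X$ lying off the union of all lines of $X$ via the determinant method, then treats the finitely many rational lines of $X$ separately (a line not arising from $\Aut F$ and carrying points with $F\neq 0$ contributes only $O(B)$, which is acceptable for every $d\geq 3$). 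Thus the ``principal obstacle'' you flag --- absolute irreducibility of two-dimensional components --- is not the right question; what must be classified are the lines on $X$ over $\bQ$ and their relation to $\Aut F$. As it stands your plan both miscounts dimensions and lacks the exponent bookkeeping from which (\ref{beta d}) actually arises.
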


The proof of Lemma \ref{SX non-est} is based on the $p$-adic determinant method of Heath-Brown as elaborated in \cite{HB1}. \\

Recall that if $F$ is a binary form we denote by $\Lambda$ the sublattice of $\bZ^2$ consisting of integer pairs $(u,v)$ for which $A \binom{u}{v}$ is in $\bZ^2$ for all $A$ in $\Aut F$. Further, if $\Aut F$ is conjugate to $\DD_3, \DD_4$ and $\DD_6$ we define $\Lambda_i$ for $i = 1,2,3,4$ as in our discussion following Table 1 in the introduction. 

\begin{lemma} \label{common core} Let $F$ be a binary form with integer coefficients, non-zero discriminant and degree $d \geq 3$. If $A$ is an element of order $3$ in $\Aut F$ then
\[\Lambda(A^2) = \Lambda(A).\]
If $\Aut F$ is equivalent under conjugation in $\GL_2(\bQ)$ to $\DD_3, \DD_4$ or $\DD_6$ then
\[\Lambda_i \cap \Lambda_j = \Lambda \text{ for } i \ne j.\]
\end{lemma}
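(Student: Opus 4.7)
I would prove the two parts of the lemma separately.

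For the first part, any order-$3$ element $A$ of $\GL_2(\bQ)$ has characteristic polynomial $x^2+x+1$: indeed $\det A$ is a rational cube root of unity and thus equals $1$, while $A\neq I$ forces $\text{tr}\,A=-1$. Cayley--Hamilton gives $A^2=-A-I$, so $A^2v=-Av-v$ for every $v\in\bQ^2$. Hence $v,Av\in\bZ^2$ forces $A^2v\in\bZ^2$, whence $\Lambda(A)\subseteq\Lambda(A^2)$. Applying the same reasoning to $A^2$ (itself of order $3$) yields the reverse inclusion and hence $\Lambda(A)=\Lambda(A^2)$.

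For the second part, the inclusion $\Lambda\subseteq\Lambda_i\cap\Lambda_j$ is trivial; the substance lies in showing that every $v\in\Lambda_i\cap\Lambda_j$ satisfies $Mv\in\bZ^2$ for each $M\in\Aut F$. A direct check against Table~1 shows that any two distinct subgroups from the lists for $\DD_3$, $\DD_4$, and $\DD_6$ generate $\Aut F$, so this is a propagation question. My main linear tool is the following character-theoretic observation: since the $2$-dimensional faithful representation $V=\bQ^2$ contains neither the trivial nor the sign subrepresentation, the class sum $\sum_{A\in C}A$ vanishes on $V$ for every non-central conjugacy class $C$ of $\Aut F$ on which the character of $V$ is zero. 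In $\DD_3$ this yields the identities $A_1+A_2+A_3=0$ and $I+A_4+A_4^2=0$, with analogous relations for $\DD_4$ and $\DD_6$. The first identity immediately handles reflection-reflection pairs in $\DD_3$: if $v\in\Lambda_i\cap\Lambda_j$ with $G_i,G_j$ two reflection subgroups, then $A_kv=-A_iv-A_jv\in\bZ^2$ places $v$ in $\Lambda_k$ for the remaining reflection.

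The main obstacle is the cross-case: extracting rotation-integrality such as $A_4v\in\bZ^2$ from reflection-integrality. The class-sum identities alone do not suffice, since $A_4$ is linearly independent from $I$ and the reflections inside $\End(\bQ^2)$. I would complete the argument by localizing prime-by-prime. At each prime $p$, write the conjugating matrix $T\in\GL_2(\bQ_p)$ (with $\Aut F=T\,\Rep(\Aut F)\,T^{-1}$) in Smith normal form $T=U\cdot\operatorname{diag}(1,p^c)\cdot V$ with $U,V\in\GL_2(\bZ_p)$ and some $c\geq 0$. Absorbing $U$ and $V$ into a $\bZ_p$-change of basis (which preserves $\bZ_p^2$ and transforms the $\Lambda_i\otimes\bZ_p$ uniformly) reduces the problem to a fixed diagonal conjugating matrix acting on a $\GL_2(\bZ_p)$-conjugate of $\Rep(\Aut F)$ sitting inside $\GL_2(\bZ_p)$. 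For such data the lattices $\Lambda_i\otimes\bZ_p$ are immediately computable by matrix multiplication, and one verifies in each of the finitely many local cases (indexed by the three groups $\DD_3$, $\DD_4$, $\DD_6$ and the pair $(i,j)$) that $(\Lambda_i\cap\Lambda_j)\otimes\bZ_p=\Lambda\otimes\bZ_p$. Assembling these local equalities yields the lemma.
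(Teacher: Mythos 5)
Your proof of the first part is correct and complete: every order-$3$ element of $\GL_2(\bQ)$ has characteristic polynomial $x^2+x+1$, and Cayley--Hamilton gives $A^2=-A-I$, so $\Lambda(A)\subseteq\Lambda(A^2)$, with the reverse inclusion by symmetry. Note that the paper itself offers no proof to compare against --- it simply cites Lemma 3.2 of \cite{SX} --- so your argument must stand on its own, and for the second part it does not quite. Your reductions are sound: integrality of $Av$ can be checked prime by prime, and Smith normal form over $\bZ_p$ lets you absorb the unimodular factors $U,V$ and assume the conjugator is $D=\operatorname{diag}(1,p^c)$. You also correctly diagnose the crux (rotation-integrality is not a linear consequence of reflection-integrality; indeed in $\DD_4$ your class sums are vacuous, since the noncentral classes are $\{A,-A\}$ and sum to zero trivially). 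But the final step is asserted, not proved: after the reduction the data is \emph{not} finite. The group inside $\GL_2(\bZ_p)$ is $G'=VG_0V^{-1}$ for an arbitrary $V\in\GL_2(\bZ_p)$, and $c$ ranges over all nonnegative integers, so your ``finitely many local cases indexed by the three groups and the pair $(i,j)$'' is actually an infinite parametric family, and declaring it ``immediately computable by matrix multiplication'' defers, rather than resolves, exactly the cross-case difficulty you flagged.

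What is missing is a verification uniform in $(V,c)$, and one is available from your own reduction. Writing $A=DMD^{-1}$ with $M\in\GL_2(\bZ_p)$, a direct computation gives
\[
\Lambda(A)\otimes\bZ_p=\left\{(x,y)\in\bZ_p^2 : y\equiv 0 \pmod{p^{\max(0,\,c-v_p(m_{12}))}}\right\},
\]
where $m_{12}$ is the upper-right entry of $M$: only one matrix entry matters, so \emph{all} the local lattices form a chain $\bZ_p\oplus p^t\bZ_p$, and the identity $\Lambda_i\cap\Lambda_j=\Lambda$ reduces to showing that among the distinguished elements the minimal valuation $v_p(m_{12})$ is attained by all but at most one. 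This follows from two observations: first, for each threshold $s$ the set $H_s=\{A\in\Aut F : v_p(m_{12}(D^{-1}AD))\geq s\}$ is a subgroup (the $(1,2)$ entry of $MM'$ is $m_{11}m_{12}'+m_{12}m_{22}'$, and inversion preserves $v_p(m_{12})$ because $\det M$ is a unit), and $H_s$ automatically contains $\pm I$; second, your generation claim, corrected to ``any two distinct distinguished subgroups generate $\Aut F$ \emph{modulo the center}'' (the stronger literal statement can fail for $\DD_6$, where two reflections may generate only a $\DD_3$), shows that if two distinct $G_i,G_j$ lay in a proper subgroup $H_s$ then $H_s=\Aut F$, a contradiction. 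With this supplement your outline closes; as written, the concluding ``finite case check'' is a genuine gap.
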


Lemma \ref{common core} is Lemma 3.2 of \cite{SX} .
\section{Outline of the proof of Theorem \ref{MT2}}

Let $N_1$ denote the number of integer pairs $(x,y)$ for which
\begin{itemize}
\item[(i)] $0 < |F(x,y)| \leq Z$, 

and

\item[(ii)] $p^k \nmid F(x,y)$ for $1 \leq p \leq \frac{1}{2kd} \log Z$. 
\end{itemize}
By Lemma \ref{partial NF}
\begin{equation} \label{N1} N_1 = c_{F,k} Z^{\frac{2}{d}} + O_{F,k} \left(Z^{\frac{2}{d}} / (\log Z \log \log Z) \right). \end{equation}

Our objective is to show that the number $N$ of integer pairs for which (i) holds and 
\begin{itemize}
\item[(iii)] $p^k \nmid F(x,y)$ for $p$ a prime,
\end{itemize}
satisfies a similar estimate to (\ref{N1}). 
To that end let $N_2$ denote the number of integer pairs $(x,y)$ for which (i) holds and $p$ divides both $x$ and $y$ for some prime $p > \frac{1}{2kd} \log Z$. Let $N_3$ denote the number of pairs of integers $(x,y)$ for which (i) holds and for some prime $p$ with
\[\frac{1}{2kd} \log Z < p \leq (\log Z)^9\]
we have $p^k | F(x,y)$ and $p \nmid \gcd(x,y)$. Let $N_4$ denote the number of integer pairs $(x,y)$ for which (i) holds and for some prime $p$ with
\[(\log Z)^9 < p \leq \frac{Z^{\frac{2}{d}}}{(\log Z)^9},\]
$p^k | F(x,y)$ and $p \nmid \gcd(x,y)$. Finally let $N_5$ denote the number of integer pairs $(x,y)$ for which (i) holds and for some prime $p$ with 
\[\frac{Z^{\frac{2}{d}}}{(\log Z)^9} < p,\]
$p^k | F(x,y)$ and $p \nmid \gcd(x,y)$. Then
\begin{equation} \label{big small decomp} N = N_1 + O (N_2 + N_3 + N_4 + N_5).
\end{equation}
In order to establish Theorem \ref{MT2} it suffices, by (\ref{N1}) and (\ref{big small decomp}), to prove that
\[N_i = O_{F,k} \left(Z^{\frac{2}{d}} /u(z)\right)\]
for $i = 2,3,4$ and $5$ where
\begin{equation} \label{T1} u(z) = \log Z\log \log Z \end{equation}
when $k$ and $r$ satisfy (\ref{k inequality}) with $(k,r)$ not $(2,6)$ or $(3,8)$,
\begin{equation} \label{T2} u(z) = (\log Z)^{\frac{(d-2)\delta}{d}} \end{equation}
with $\delta=0.7043$ when $(k,r)$ is $(2,6)$ and 
\begin{equation} \label{T3} u(z) = (\log \log Z / \log \log \log Z)^{1-{\frac{2}{d}}}  \end{equation} when $(k,r)$ is $(3,8)$. \\

We may suppose that $F$ factors over $\bQ$ as
\begin{equation} \label{T5} F(x,y) = \prod_{i=1}^t F_i(x,y)\end{equation}
with $F_i$ in $\bZ[x,y]$ and irreducible over $\bQ$ for $i = 1, \cdots, t$. Let $r_i$ be the degree of $F_i$ for $i = 1, \cdots, t$ and put
\begin{equation} \label{largest degree} r = \max\{r_1, \cdots, r_t\}. \end{equation}
\section{An estimate for $N_2$ and for $N_3$}

Notice that if $p$ divides $a$ and $b$ and $0 < |F(a,b)| \leq Z$ then $|F(a,b)| = p^d |F(a/p, b/p)|$, so $p \leq Z^{\frac{1}{d}}$. As a consequence 
\begin{equation} \label{N2 est} N_2 = O \left(\sum_{\frac{1}{2kd} \log Z < p \leq Z^{\frac{1}{d}}} \left \lvert  \left \{(x,y) \in \bZ^2 : 0 < |F(x,y)| \leq \frac{Z}{p^d} \right \} \right \rvert \right).
\end{equation}
Further, by Lemma \ref{Thunder lemma}, for each prime $p$ with $p \leq Z^{\frac{1}{d}}$, 
\begin{equation} \label{N2 est 2} \left \lvert \left \{ (x,y) \in \bZ^2 : 0 < |F(x,y)| \leq \frac{Z}{p^d} \right \} \right \rvert = A_F \frac{Z^{\frac{2}{d}}}{p^2} + O_F \left( \left(\frac{Z}{p^d}\right)^{\frac{1}{d-1}} \right). 
\end{equation}
Thus by (\ref{N2 est}) and (\ref{N2 est 2}), 
\begin{equation} \label{N2 est 3} N_2 = O_F \left( \left(\sum_{\frac{1}{2kd} \log Z < p } \frac{1}{p^2} \right) Z^{\frac{2}{d}} \right). \end{equation}
It now follows from (\ref{PNT2}) and (\ref{N2 est 3}) that
\begin{equation} \label{N2 est 4} N_2 = O_{F,k} \left(Z^{\frac{2}{d}} / (\log Z \log \log Z) \right). \end{equation}

The integer pairs $(a,b)$ with $F(a,b) \equiv 0 \pmod{p^k}$ and for which $p$ does not divide both $a$ and $b$ lie in at most $d$ sublattices $L_\theta$ of $\bZ^2$, provided that $p$ does not divide the discriminant $\Delta(F)$ of $F$, see \cite{Gre}. Each sublattice $L_\theta$ is defined by a congruence of the form
\[a \equiv \theta b \pmod{p^k}\]
for some integer $\theta$ with $0 \leq \theta < p^k$. Let $(a_1, a_3)$ and $(a_2, a_4)$ be a basis for $L_\theta$ chosen so that $\max\{|a_1|, |a_2|, |a_3|, |a_4|\}$ is minimized. Then
\[\max\{|a_1|, |a_2|, |a_3|, |a_4| \} \leq p^k.\]
Put
\[F_{L_\theta}(x,y) = F(a_1 x + a_2 y, a_3 x + a_4 y)\]
and notice that
\[\left \lvert \N_F(Z) \cap L_\theta \right \rvert = N_{F_{L_\theta}} (Z).\]
Observe that
\[\HH(F_{L_\theta}) \leq 4^dp^{kd} \HH(F).\]
Therefore by Lemma \ref{Thunder lemma}
\[N_{F_{L_\theta}}(Z) = A_{F_{L_\theta}} Z^{\frac{2}{d}} + O_F \left(p^{kd} Z^{\frac{1}{d-1}} \right)\]
and, since the lattice $L_\theta$ has determinant $p^k$, 
\begin{equation} \label{NFL} N_{F_{L_\theta}} (Z) = \frac{A_F Z^{\frac{2}{d}}}{p^k} + O_F \left(p^{kd} Z^{\frac{1}{d-1}} \right). \end{equation} 
Thus
\begin{align*} N_3 & = O_{F,k} \left(Z^{\frac{2}{d}} \sum_{\frac{1}{2kd} \log Z < p \leq (\log Z)^9} \frac{1}{p^k} \right) \\
& = O_{F,k} \left(Z^{\frac{2}{d}} \sum_{\frac{1}{2kd} \log Z < p} \frac{1}{p^2} \right), 
\end{align*}
and so, by (\ref{PNT2}),
\begin{equation} \label{N3 est} N_3 = O_{F,k} \left(Z^{\frac{2}{d}} / (\log Z \log \log Z) \right). \end{equation} 

\section{An estimate for $N_4$}

In order to estimate $N_4$ we note that
\begin{equation} \label{N4 est 1} N_4 = O \left(N_{4}^{(1)} + N_{4}^{(2)} \right) \end{equation}
where $N_4^{(1)}$ is the number of integer pairs $(x,y)$ for which
\begin{equation} \label{N4 xy range} \max\{|x|, |y|\} \leq Z^{\frac{1}{d}} (\log Z)^{7/2} \end{equation}
and for which
 $p^k$ divides $F(x,y)$ for some $p$ with
\begin{equation} \label{N4 p range} (\log Z)^9 < p \leq Z^{\frac{2}{d}} /(\log Z)^{9} \end{equation}
which does not divide both $x$ and $y$.
Further $N_4^{(2)}$ is the number of integer pairs $(x,y)$ for which $0 < |F(x,y)| \leq Z$ and
\[\max\{|x|, |y|\} > Z^{\frac{1}{d}} (\log Z)^{7/2}.\] 
By Lemma \ref{cusp lemma} we have, since $d$ is at least 3,
\begin{equation} \label{N4b est} N_4^{(2)} = O_F \left(Z^{\frac{2}{d}} /(\log Z)^{\frac{7}{2}} \right). \end{equation}
It remains only to estimate $N_4^{(1)}$ and we shall do so by a modification of an argument of Greaves \cite{Gre} based on the geometry of numbers. \\

Recall (\ref{T5}). For $i=1,\dots,t$ we let $N_{4,i}^{(1)}$ be the number of integer pairs $(x,y)$ for which $F(x,y) \neq 0$, (\ref{N4 xy range}) holds and $p^k$ divides $F_{i}(a,b)$ for some prime satisfying (\ref{N4 p range}) which does not divide both $x$ and $y$. Then
\begin{equation} \label{N44 est} N_{4}^{(1)} = O(N_{4,i}^{(1)} + \dots +  N_{4,t}^{(1)}).    \end{equation}

Suppose that $(a,b)$ is an integer pair for which $p^k$ divides $F_i(a,b)$ for some prime $p$ satisfying (\ref{N4 p range}) which does not divide both $a$ and $b$. We may suppose that $Z$ is sufficiently large that $(\log Z)^9$ exceeds $|\Delta(F)|$. Then $(a,b)$ belongs to one of at most $r_i$ lattices $L_\theta$ defined by a congruence
\[a \equiv \theta b \pmod{p^k}.\]
Following Greaves \cite{Gre} we let $M = M(\theta, p^k)$ denote the minimal possible positive value of $\max\{|a|, |b|\}$ as we range over $(a,b)$ in $L_\theta$. For any real number $X$ let $N_{\theta,k}(X)$ denote the number of pairs $(a,b)$ in $L_\theta$ for which $|a| \leq X$ and $|b| \leq X$. Then, by Lemma 1 of \cite{Gre},
\begin{equation} \label{Greaves eqn} N_{\theta,k}(X) \leq \frac{4X^2}{p^k} + O \left(\frac{X}{M}\right). \end{equation}
It then follows from (\ref{Greaves eqn}) with $X = Z^{\frac{1}{d}} (\log Z)^{\frac{7}{2}} $ that 
\begin{align*} N_{4,i}^{(1)} & \leq \sum_{(\log Z)^9< p \leq Z^{\frac{2}{d}} (\log Z)^{-9}} \sum_\theta N_{\theta, k}(X)\\
& \leq \sum_{(\log Z)^9 < p \leq Z^{\frac{2}{d}} (\log Z)^{-9}} \sum_\theta \left(\frac{4Z^{\frac{2}{d}} (\log Z)^7}{p^k} + O \left( \frac{Z^{\frac{1}{d}} (\log Z)^{\frac{7}{2}}} {M(\theta, p^k)} \right) \right). \end{align*} 
For each prime $p$ we have at most $d$ terms $\theta$ in the inner sum. Thus 
\begin{align} \label{mediumMT} N_{4,i}^{(1)} & = O_F \left(Z^{\frac{2}{d}} (\log Z)^7 \sum_{(\log Z)^9 < p \leq Z^{\frac{2}{d}} (\log Z)^{-9}} \frac{1}{p^k} \right)  \\
& + O_F \left(Z^{\frac{1}{d}} (\log Z)^{\frac{7}{2}} \sum_{(\log Z)^9 < p \leq Z^{\frac{2}{d}} (\log Z)^{-9}} \sum_\theta \frac{1}{M(\theta, p^k)} \notag \right). 
\end{align}
Certainly
\begin{align} \label{medium prime MT} \sum_{(\log Z)^9 < p \leq Z^{\frac{2}{d}} (\log Z)^{-9}} \frac{1}{p^k} & = O \left(\sum_{(\log Z)^9 < p} \frac{1}{p^k} \right) \\
& = O \left(\frac{1}{(\log Z)^{9(k-1)}} \right) \notag \\
& = O \left(\frac{1}{(\log Z)^9}\right). \notag
\end{align}
Further, since $M(\theta, p^k)$ is at least 1, 
\begin{equation} \label{short vector} \sum_{(\log Z)^9 < p \leq Z^{\frac{1}{2d}}(\log Z)^2} \sum_\theta \frac{1}{M(\theta, p^k)} = O_F \left(Z^{\frac{1}{2d}}(\log Z)^2 \right). 
\end{equation}
It remains to estimate $S$ where
\[S = \sum_{Z^{\frac{1}{2d}}(\log Z)^2  < p < Z^{\frac{2}{d}} (\log Z)^{-9}} \sum_\theta \frac{1}{M(\theta, p^k)}. \]
Notice that if $r_i=1$ then $F_i(x,y)=O_F(Z^{\frac{1}{d}}(\log Z)^{\frac{7}{2}})$ and so if $p^k$ divides $F_i(x,y)$ then, since $k\geq 2$, $p=O_F(Z^{\frac{1}{2d}}( \log Z)^{\frac{7}{4}})$.Thus if $r_i=1$ then
\begin{equation} \label{S44} S=O_F(1). \end{equation}

We shall now estimate $S$ under the assumption that $r_i>1$.
We put $S = S_1 + S_2$ where $S_1$ is the sum over pairs $p, \theta$ with 
\[M(\theta, p^k) \geq \frac{Z^{\frac{1}{d}} }{(\log Z)^5}\]
and $S_2$ is the sum over the other pairs $(p, \theta)$. Certainly 
\begin{align} \label{S1} S_1 & = O_F \left(\sum_{p \leq Z^{\frac{2}{d}} (\log Z)^{-9}} \frac{(\log Z)^5}{Z^{\frac{1}{d}}} \right) \\
& = O_F \left(\frac{Z^{\frac{1}{d}}}{(\log Z)^5} \right). \notag
\end{align}
On the other hand $S_2$ consists of the sum over pairs $p, \theta$ with 
\[1 \leq M \leq \frac{Z^{\frac{1}{d}}}{(\log Z)^5},\]
and $p > Z^{\frac{1}{2d}}(\log Z)^2$. To each pair $p, \theta$ we may associate a pair of integers $(r,s)$  for which $\max\{|r|, |s|\} = M(\theta, p^k)$. Note that since $r_i>1$ we have $F_i(r,s)\neq 0$. Further there are at most $O_F(1)$ pairs $(p,\theta)$ with $p> Z^{\frac{1}{2d}}(\log Z)^2$ which can be associated with a given pair $(r,s)$ since $F_i(r,s)=O_F(Z^{\frac{r_i}{d}})$. Thus
\begin{align} \label{S2} S_2 & = O \left(\sum_{1 \leq s \leq Z^{\frac{1}{d}} (\log Z)^{-5}} \frac{1}{s} \sum_{0 \leq r \leq s} \sum_{\substack{p^k | F(r,s) \\ F(r,s) \ne 0 \\ p > Z^{\frac{1}{2d}}(\log Z)^2}} 1 \right)  \\
& = O_{F,k} \left( Z^{\frac{1}{d}} (\log Z)^{-5} \right). \notag
\end{align} 
 Therefore, by (\ref{N44 est}), (\ref{mediumMT}), (\ref{medium prime MT}), (\ref{short vector}), (\ref{S44}), (\ref{S1}) and (\ref{S2}), 
\begin{equation} \label{N4a est} N_4^{(1)} = O_{F,k} \left(Z^{\frac{2}{d}} / (\log Z)^{\frac{3}{2}} \right). \end{equation}
Further, by (\ref{N4 est 1}), (\ref{N4b est}) and (\ref{N4a est}), 
\begin{equation} \label{N4 final} N_4 = O_{F,k}\left(Z^{\frac{2}{d}} /(\log Z)^{\frac{3}{2}} \right). \end{equation}

\section{An estimate for $N_5$}

For any real number $T$ let $B^*_{F,k}(T)$ denote the number of pairs of integers $(x,y)$ with $\max (|x|,|y|) \leq T$ and for which $F(x,y)$ is divisible by $p^k$ with $p$ a prime larger than $T^2/(\log T)^{12}$. Then
\begin{equation} B^*_{F,k}(T) = O(B^*_{F_1,k}(T)+...+   B^*_{F_t,k}(T)). \end{equation}
If $r \leq 2k+1$ then Greaves used Selberg's sieve to prove that
\begin{equation} \label{Greaves bound 1} B^*_{F_i,k}(T) = O_{F,k} \left(T^{2 - \frac{1}{20}} \right) \end{equation}
for $i = 1, \cdots, t$. This follows from the proof of Lemma 4 of \cite{Gre} on taking $x = T$ and $\eta = (\log T)^{-16}$; Greaves required the constraint $\eta \geq (\log T)^{-2}$ but it may be replaced with the weaker constraint $\eta \geq (\log T)^{-16}$. 
Xiao dealt with the case when 
\[\frac{7}{18} < \frac{k}{r} < \frac{1}{2}\]
in \cite{X} by means of the determinant method applied to weighted projective spaces. It follows from \cite{X} that in this case
\begin{equation} \label{big prime bound 1} B^*_{F_i,k}(T) = O_{F,k} \left(T^{\frac{2}{d}} /(\log T)^{4} \right) \end{equation}
for $i = 1, \cdots, t$.
Therefore for $\frac{k}{r} > \frac{7}{18}$
\begin{equation} \label{big prime bound 11} B^*_{F,k}(T) = O_{F,k} \left(T^{2}/ (\log T)^{4} \right). \end{equation}

By a result of Helfgott, see the proof of Theorem 5.2 of \cite{Hel}, when $(k,r)$ is $(2,6)$
\begin{equation} \label{big prime bound 12} B^*_{F,2}(T) = O_{F,2} \left(T^{2}/ (\log T)^{\delta} \right) \end{equation}
where
\[\delta = 0.7043.\]

Hooley in 2009 established an asymptotic estimate for the number of integer pairs $(x,y)$ in a box for which $F(x,y)$ is cubefree when $F$ is a binary form of degree $8$ with integer coefficients which is irreducible over the rationals, see \cite{Hoo3} and Theorem 2 of \cite{Hoo4}. Xiao \cite{XCJM} extended this work to decomposable forms $F$ and an examination of his proof yields an explicit error term from which we find that
\begin{equation} \label{N5665} B^*_{F,3}(T) = O_F \left(T^{2}/(\log \log T / \log \log \log T) \right) \end{equation} 
when $(k,r)$ is $(3,8)$. \\

Define $g(T)$ by
\begin{equation} \label{Beta DDD} g(T) = \begin{cases} (\log T)^4 & \text{if } \frac{k}{r} > \frac{7}{18} \\ \\
(\log T)^{\delta} & \text{if } (k,r)=(2,6)\\ \\ 
\log \log T / \log \log \log T & \text{if } (k,r) = (3,8). \end{cases}
\end{equation}
Then by (\ref{big prime bound 11}), (\ref{big prime bound 12}) and (\ref{N5665}),
\begin{equation} \label{big prime bound 13} B^*_{F,k}(T) = O_{F,k} \left(T^{2}/g(T) \right) \end{equation}
for $(k,r)$ satisfying (\ref{k inequality}).

 Put
\begin{equation} \label{B33} f(T) = g\left(T^{\frac{1}{d}}\right)^{\frac{1}{d}}. \end{equation}
Let $N_5^{(1)}$ be the number of integer pairs $(x,y)$ for which $F(x,y) \ne 0$ and $p^k | F(x,y)$ for some prime $p$ with
\begin{equation} \label{large prime} p > Z^{\frac{2}{d}}/(\log Z)^{9}\end{equation}
which does not divide both $x$ and $y$ and for which
\begin{equation} \label{xy bound 2} \max\{|x|, |y|\} \leq Z^{\frac{1}{d}}f(Z). \end{equation} 
Further, write $N_5^{(2)}$ for the number of integer pairs $(x,y)$ for which $0 < |F(x,y)| \leq Z$ and 
\begin{equation} \label{xy bound 3} \max\{|x|, |y|\} > Z^{\frac{1}{d}}f(Z). \end{equation} 
Notice that $N_5 = O \left(N_5^{(1)} + N_5^{(2)}\right)$. By Lemma \ref{cusp lemma},
\begin{equation} \label{N56} N_5^{(2)} = O_{F,k} \left(Z^{\frac{2}{d}} /f(Z)^{d-2} \right) = O_{F,k} \left(Z^{\frac{2}{d}}/g(Z^{\frac{1}{d}})^{\frac{d-2}{d}}  \right). \end{equation} Furthermore, on taking $T = Z^{\frac{1}{d}}f(Z)$, we see from (\ref{big prime bound 13}) and (\ref{B33}) that
\[N_5^{(1)} = O_{F,k} \left(Z^{\frac{2}{d}} f(Z)^2/g(Z^{\frac{1}{d}}f(Z)) \right)\]
Since $g(T)$ is eventually increasing and tends to infinity with $T$ it follows from (\ref{B33}) that $f(Z)$ is at least $1$ for $Z$ sufficiently large. We then have $g(Z^{\frac{1}{d}}) \leq g(Z^{\frac{1}{d}}f(Z))$ and so
\[N_5^{(1)} = O_{F,k} \left(Z^{\frac{2}{d}} f(Z)^2/g(Z^{\frac{1}{d}}) \right)\]But $f(Z)^2/g(Z^{\frac{1}{d}}) = f(Z)^{-d+2}$, by (\ref{B33}), and thus
\[N_5^{(1)} = O_{F,k} \left(Z^{\frac{2}{d}}/g(Z^{\frac{1}{d}})^{\frac{d-2}{d}} \right).\]
Therefore
\begin{equation} \label{N566} N_5 = O_{F,k} \left(Z^{\frac{2}{d}}/g(Z^{\frac{1}{d}})^{\frac{d-2}{d}} \right). \end{equation} 
Theorem \ref{MT2} now follows from (\ref{N1}), (\ref{big small decomp}), (\ref{N2 est 4}), (\ref{N3 est}), (\ref{N4 final}), (\ref{Beta DDD}) and (\ref{N566}).  \\

If $F$ is a binary form with integer coefficients, nonzero discriminant and degree at least $3$ and $k$ is an integer larger than $1$ then there exists a positive monotone increasing function  $g_1(t)$ on the positive real numbers with $0 \leq g_1(t) \leq \log (t+2)$ for all positive real numbers $t$ and
\[ \lim_{t\to\infty} g_1(t) = \infty \]
such that
\begin{equation} \label{big prime bound 14} B^*_{F,k}(T) = O_{F,k} \left(T^{2}/g_1(T) \right), \end{equation}
subject to the $abc$ conjecture. Granville \cite{Gran} showed this when $k=2$ and his argument extends readily to the general case. Arguing as above we deduce that Conjecture \ref{MC} holds for $N_{F,k}(Z)$. With this estimate for $N_{F,k}(Z)$ we are then able to establish Conjecture \ref{MC} for $R_{F,k}(Z)$ as in the next section. 
\section{The proof of Theorems \ref{MT1} and \ref{MT3}}

If $\Aut F = \CC_1$ then every integer pair $(x,y)$ for which $F(x,y)$ is essentially represented with $0 < |F(x,y)| \leq Z$ gives rise to a distinct integer $h$ with $0 < |h| \leq Z$. It follows from Theorem \ref{MT2} and Lemma \ref{SX non-est} that
\begin{equation} \label{RFK main} R_{F,k}(Z) = c_{F,k} Z^{\frac{2}{d}} + O_{F,k} \left(Z^{\frac{2}{d}} / u(z)\right)\end{equation}
where $u(z)$ is defined as in (\ref{T1}) when $k$ and $r$ satisfy (\ref{k inequality}) with $(k,r)$ not $(2,6)$ or $(3,8)$, as in (\ref{T2}) when $(k,r)$ is $(2,6)$ and satisfies (\ref{T3})  when $(k,r)$ is $(3,8)$. Similarly if $\Aut F = \CC_2$ then (\ref{RFK main}) holds with $\frac{1}{2} c_{F,k}$ in place of $c_{F,k}$. \\

Suppose now that $\Aut F$ is conjugate to $\CC_3$. Then for $A$ in $\Aut F$ with $A \ne I$ we have, by Lemma \ref{common core}, $\Lambda(A) = \Lambda(A^2)$. Thus whenever $(x,y)$ is in $\Lambda$, $F(x,y) = h$ and $h$ is essentially represented there are exactly two other pairs $(x_1, y_1), (x_2, y_2)$ for which $F(x_i, y_i) = h$ for $i = 1,2$. When $(x,y)$ is in $\bZ^2$ but not in $\Lambda$ and $F(x,y)$ is essentially represented then $F(x,y)$ has exactly one representative. \\

Let $\{\omega_1, \omega_2\}$ be a basis for $\Lambda$ with $\omega_1 = (a_1, a_3)$ and $\omega_2 = (a_2, a_4)$ and such that $\max\{|a_1|,|a_2|,|a_3|,|a_4\}$ is minimized. Recall that
\[F_{\omega_1, \omega_2}(x,y) = F(a_1 x + a_2 y, a_3 x + a_4 y).\]
Since
\[|\N_{F,k}(Z) \cap \Lambda | = N_{F_{\omega_1, \omega_2}, k} (Z),\]
by Theorem \ref{MT2} we have
\begin{equation} \label{NFK L} |\N_{F,k}(Z) \cap \Lambda | = c(\Lambda) Z^{\frac{2}{d}} + O_{F, k} \left(Z^{\frac{2}{d}} / u(z)\right). \end{equation}
Note that since $\omega_1$ and $\omega_2$ are chosen so that $\max\{|a_1|,|a_2|,|a_3|,|a_4|\}$ is minimized the implicit constant in the error term may be determined in terms of $F$ and $k$. 
By (\ref{NFK L}) and  Lemma \ref{SX non-est} the number of integer pairs $(x,y)$ in $\Lambda$ for which $0 < |F(x,y)| \leq Z$ and $F(x,y)$ is $k$-free and essentially represented is 
\[c(\Lambda) Z^{\frac{2}{d}} + O_{F, k} \left(Z^{\frac{2}{d}} / u(z) \right).\]
Each pair $(x,y)$ is associated with two other pairs which represent the same integer. These pairs yield
\begin{equation} \label{3-peat} \frac{c(\Lambda)}{3} Z^{\frac{2}{d}} + O_{F,k} \left(Z^{\frac{2}{d}} / u(z)\right) \end{equation}
integers $h$ with $0 < |h| \leq Z$. It now follows from Theorem \ref{MT2} and Lemma \ref{SX non-est} that there are
\begin{equation} \label{C3 1-peat} (c_{F,k} - c(\Lambda)) Z^{\frac{2}{d}} + O_{F,k} \left(Z^{\frac{2}{d}} / u(z)\right) \end{equation}
integer pairs $(x,y)$ not in $\Lambda$ for which $F(x,y)$ is $k$-free and essentially represented and each pair gives rise to an integer $h$ with $0 < |h| \leq Z$ which is uniquely represented by $F$. It follows from (\ref{3-peat}) and (\ref{C3 1-peat}) that when $\Aut F$ is equivalent to $\CC_3$ we have
\[R_{F,k}(Z) = \left(c_{F,k} - \frac{2}{3} c(\Lambda) \right) Z^{\frac{2}{d}} + O_{F,k} \left(Z^{\frac{2}{d}} / u(z) \right).\]
A similar analysis applies to the case when $\Aut F$ is equivalent to $\DD_1, \DD_2, \CC_4$ or $\CC_6$. These groups are cyclic with the exception of $\DD_2$ but $\DD_2/\{\pm I\}$ is cyclic and that is sufficient for our purposes. \\

We are left with the possibility that $\Aut F$ is conjugate to $\DD_3, \DD_4$ or $\DD_6$. We first consider the case when $\Aut F$ is equivalent to $\DD_4$. In this case (\ref{NFK L}) holds as before and since each $h$ which is essentially represented by $F$ and for which $h=F(x,y)$ with $(x,y)$ in $\Lambda$ is represented by $8$ integer pairs the number of $k$-free integers $h$ with $0 < |h| \leq Z$ for which there exists an integer pair $(x,y)$ in $\Lambda$ with $F(x,y) = h$ is
\begin{equation} \label{D4 core} \frac{c(\Lambda)}{8} Z^{\frac{2}{d}} + O_{F,k} \left(Z^{\frac{2}{d}} / u(z) \right). \end{equation}
By Lemma \ref{common core} $\Lambda_i \cap \Lambda_j = \Lambda$ for $1 \leq i < j \leq 3$ and so the number of integer pairs $(x,y)$ in $\Lambda_1, \Lambda_2$ or $\Lambda_3$ but not in $\Lambda$ for which $F(x,y)$ is essentially represented and $k$-free with $0 < |F(x,y)| \leq Z$ is, by Theorem \ref{MT2}, 
\[\left(c(\Lambda_1) + c(\Lambda_2) + c(\Lambda_3) - 3 c (\Lambda) \right) Z^{\frac{2}{d}} + O_{F,k} \left(Z^{\frac{2}{d}} / u(z)\right).\]
Each such integer $F(x,y)$ has precisely four representatives and so the terms in $\Lambda_1, \Lambda_2, \Lambda_3$ but not in $\Lambda$ contribute 
\begin{equation} \label{D4 not core} \frac{1}{4} \left(c(\Lambda_1) + c(\Lambda_2) + c(\Lambda_3) - 3 c (\Lambda) \right) Z^{\frac{2}{d}} + O_{F,k} \left(Z^{\frac{2}{d}} / u(z)\right) \end{equation}
terms to $\R_{F,k}(Z)$. Finally the terms $(x,y)$ in $\N_{F,k}(Z)$ but not in $\Lambda_1, \Lambda_2$ or $\Lambda_3$ for which $F(x,y)$ is essentially represented have cardinality
\[\left(c_{F,k} - c(\Lambda_1) - c(\Lambda_2) - c(\Lambda_3) + 2 c(\Lambda)\right) Z^{\frac{2}{d}} + O_{F,k}\left(Z^{\frac{2}{d}} / u(z)\right). \]
Each integer represented by such a term has $2$ representations and therefore these terms contribute
\begin{equation} \label{D4 no lat} \frac{1}{2} \left(c_{F,k} - c(\Lambda_1) - c(\Lambda_2) - c(\Lambda_3) + 2 c(\Lambda) \right)Z^{\frac{2}{d}} + O_{F,k} \left(Z^{\frac{2}{d}} / u(z)\right) \end{equation}
terms to $\R_{F,k}(Z)$. It now follows from (\ref{D4 core}), (\ref{D4 not core}), (\ref{D4 no lat}) and Lemma \ref{SX non-est} that 
\[R_{F,k}(Z) = \frac{1}{2} \left(c_{F,k} - \frac{c(\Lambda_1)}{2} - \frac{c(\Lambda_2)}{2} - \frac{c(\Lambda_3)}{2} + \frac{3 c(\Lambda)}{4} \right) Z^{\frac{2}{d}} + O_{F,k}\left(Z^{\frac{2}{d}} / u(z)\right),\]
as required. \\

Next suppose that $\Aut F$ is conjugate to $\DD_3$. As before the pairs $(x,y)$ in $\N_{F,k}(Z) \cap \Lambda$ for which $F(x,y)$ is essentially represented yield
\begin{equation} \label{D3 core} \frac{c(\Lambda)}{6} Z^{\frac{2}{d}} + O_{F,k}\left(Z^{\frac{2}{d}} / u(z) \right) \end{equation} 
terms in $\R_{F,k}(Z)$. Since $\Lambda_i \cap \Lambda_j = \Lambda$ for $1 \leq i < j \leq 3$ by Lemma \ref{common core} the pairs $(x,y)$ in 
$\N_{F,k}(Z) \cap \Lambda_i \text{ for } i = 1,2,3$ which are not in $\Lambda$ and which are essentially represented contribute
\begin{equation} \label{D3 not core} \left(\frac{c(\Lambda_1)}{2} + \frac{c(\Lambda_2)}{2} + \frac{c(\Lambda_3)}{2} - \frac{3 c(\Lambda)}{2} \right) Z^{\frac{2}{d}} + O_{F,k}\left(Z^{\frac{2}{d}} / u(z) \right) \end{equation}
terms to $\R_{F,k}(Z)$. The pairs $(x,y)$ in $\N_{F,k}(Z) \cap \Lambda_4$ which are not in $\Lambda$ and for which $F(x,y)$ is essentially represented contribute
\begin{equation} \label{D3 L4} \left(\frac{c(\Lambda_4)}{3} - \frac{c(\Lambda)}{3} \right) Z^{\frac{2}{d}} + O_{F,k} \left(Z^{\frac{2}{d}} / u(z)\right) \end{equation}
terms to $\R_{F,k}(Z)$. Finally the pairs $(x,y)$ in $\N_{F,k}(Z)$ which do not lie in $\Lambda_i$ for $i = 1,2,3,4$ contribute, by Lemma \ref{common core}, 
\begin{equation} \label{D3 no lat} \left(c_{F,k} - c(\Lambda_1) - c(\Lambda_2) - c(\Lambda_3) - c(\Lambda_4)+ 3 c(\Lambda)\right) Z^{\frac{2}{d}} + O_{F,k}\left(Z^{\frac{2}{d}} / u(z) \right) \end{equation}
terms to $\R_{F,k}(Z)$. It then follows from (\ref{D3 core}), (\ref{D3 not core}), (\ref{D3 L4}), (\ref{D3 no lat}) and Lemma \ref{SX non-est} that
\[R_{F,k}(Z) = \left(c_{F,k} - \frac{c(\Lambda_1)}{2} - \frac{c(\Lambda_2)}{2} - \frac{c(\Lambda_3)}{2} - \frac{2 c(\Lambda_4)}{3} + \frac{4 c(\Lambda)}{3} \right) Z^{\frac{2}{d}} + O_{F,k} \left(Z^{\frac{2}{d}} / u(z)\right),\]
as required. \\

When $\Aut F$ is equivalent to $\DD_6$ the analysis is the same as for $\DD_3$ taking into account the fact that $\Aut F$ contains $-I$ and so the weighting factor is one half of what it is when $\Aut F$ is equivalent to $\DD_3$. \\

Finally we note that, since there is no prime $p$ such that $p^k$ divides $F(a,b)$ for all pairs of integers $(a,b)$, $c_{F,k}$ is a positive number. We have
\[ C_{F,k} \geq c_{F,k}/|\Aut F| \]
and, since the order of the automorphism group of $F$ is at most $12$, the order of $\DD_6$, we deduce that $C_{F,k}$ is positive.
This completes the proof of Theorems \ref{MT1} and \ref{MT3}.

\end{document}